\newcommand{\ve}[2]{\langle #1 ,  #2 \rangle}
\newcommand{\eqdef}{\stackrel{\text{def}}{=}}
\newcommand{\R}{\mathbb{R}}
\DeclareMathOperator{\mynnz}{{nnz}}
\DeclareMathOperator*{\argmax}{arg\,max}
\DeclareMathOperator{\Exp}{\mathbb{E}}           
\DeclareMathOperator{\dom}{dom}         
\theoremstyle{plain}
\newtheorem{theorem}{Theorem}
\newtheorem{proposition}[theorem]{Proposition}
\newtheorem{corollary}[theorem]{Corollary}
\newtheorem{lemma}[theorem]{Lemma}
\theoremstyle{definition}
\newtheorem{definition}[theorem]{Definition}
\newcommand*{\rom}[1]{\expandafter\@slowromancap\romannumeral #1@}
\icmltitlerunning{Stochastic Dual Coordinate Ascent with Adaptive Probabilities}
\begin{document} 

\twocolumn
[
\icmltitle{Stochastic Dual Coordinate Ascent with Adaptive Probabilities}

\icmlauthor{Dominik Csiba}{cdominik@gmail.com}
\icmlauthor{Zheng Qu}{zheng.qu@ed.ac.uk}
\icmlauthor{Peter Richt\'{a}rik}{peter.richtarik@ed.ac.uk}
\icmladdress{University of Edinburgh}

\icmlkeywords{adaptivity, stochastic dual coordinate ascent, empirical risk minimization, optimization}

\vskip 0.3in
]

\begin{abstract} 

This paper introduces AdaSDCA: an adaptive variant of  stochastic dual coordinate ascent (SDCA) for solving the regularized empirical risk minimization  problems. Our modification consists in allowing the method adaptively change the probability distribution over the dual variables throughout the iterative process. AdaSDCA achieves provably better  complexity bound than SDCA with the best fixed probability distribution, known as importance sampling. However, it is of a theoretical character as it is expensive to implement. We also propose AdaSDCA+: a practical variant  which in our experiments outperforms existing non-adaptive methods.
\end{abstract} 

\section{Introduction}
\label{introduction}

\textbf{Empirical Loss Minimization.} In this paper we consider the regularized empirical risk minimization problem:
\begin{equation} \label{def:primal}
\min_{w \in \R^d} \left[ P(w) \eqdef \frac{1}{n}\sum_{i=1}^n \phi_i(A_i^\top w) + \lambda g(w) \right].
\end{equation}
In the context of supervised learning, $w$ is a linear predictor, $A_1,\dots, A_n\in \R^d$ are samples, $\phi_1,\dots,\phi_n:\R^d\to \R$ are loss functions, $g:\R^d\to \R$ is a regularizer and $\lambda>0$ a regularization parameter. Hence, we are seeking to identify the predictor which minimizes the average (empirical) loss $P(w)$.

We assume throughout that the loss functions are $1/\gamma$-smooth for some $\gamma>0$. That is, we assume they are differentiable and have Lipschitz derivative with Lipschitz constant $1/\gamma$: \[|\phi'(a)-\phi'(b)|\leq \frac{1}{\gamma}|a-b|\] for all $a,b\in \R$. Moreover, we assume that $g$ is $1$-strongly convex with respect to the L2 norm:
\[g(w) \leq \alpha g(w_1) + (1-\alpha)g(w_2) - \frac{\alpha(1-\alpha)}{2}\|w_1-w_2\|^2\]
for all $w_1,w_2\in \dom g$, $0\leq \alpha\leq 1$ and $w=\alpha w_1 +(1-\alpha) w_2$.

The ERM problem~\eqref{def:primal} has received considerable attention in recent years due to its widespread usage in supervised statistical learning \cite{SDCA}. Often, the number of samples $n$ is  very large and it is important to design algorithms that would be efficient in this regime.  

\textbf{Modern stochastic algorithms for ERM.} Several highly efficient methods for solving the ERM problem were proposed and analyzed recently. These include primal methods such as SAG \cite{SAG}, SVRG \cite{SVRG}, S2GD \cite{S2GD},  SAGA \cite{SAGA}, mS2GD \cite{mS2GD} and MISO \cite{MISO}. Importance sampling was considered in  ProxSVRG \cite{proxSVRG} and S2CD  \cite{S2CD}. 

\textbf{Stochastic Dual Coordinate Ascent.} One of the most successful methods in this category is  {\em stochastic dual coordinate ascent (SDCA)}, which  operates on the dual of the ERM problem \eqref{def:primal}:
\begin{equation} \label{def:dual}
\max_{\alpha = (\alpha_1, \dots, \alpha_n) \in \R^n} \left[ D(\alpha) \eqdef - f(\alpha) - \psi(\alpha) \right],
\end{equation}
where functions $f$ and $\psi$ are  defined by  
\begin{align}
f(\alpha) &\eqdef \lambda g^*\left( \frac{1}{\lambda n}\sum_{i=1}^n A_i \alpha_i \right), \label{def:f}
\\ \psi(\alpha) &\eqdef \frac{1}{n} \sum_{i=1}^n \phi_i^*(-\alpha_i), \label{def:psi}
\end{align}
and $g^*$ and $\phi_i^*$ are the convex conjugates\footnote{By the convex (Fenchel) conjugate of a function $h : \R^k \rightarrow \R$ we mean the function $h^* : \R^k \rightarrow \R$ defined by $h^*(u)~=~\sup_s\{s^\top u - h(s)\}$.} of $g$ and $\phi_i$, respectively. Note that in dual problem, there are as many variables as there are samples in the primal: $\alpha\in \R^n$. 

SDCA in each iteration randomly selects a  dual variable $\alpha_i$, and performs its update, usually via  closed-form formula -- this strategy is know as randomized coordinate descent. Methods based on updating randomly selected dual variables  enjoy, in our setting, a linear convergence rate~\cite{SDCA, ProxSDCA, pegasos2, ASDCA, IProx-SDCA, Quartz}. These methods have attracted considerable attention in the past few years, and include SCD \cite{ShalevTewari09}, RCDM \cite{Nesterov:2010RCDM}, UCDC \cite{UCDC}, ICD   \cite{ICD}, PCDM \cite{PCDM}, SPCDM \cite{SPCDM}, SPDC~\cite{SPDC}, APCG~\cite{APCG}, RCD \cite{Necoara:rcdm-coupled},  APPROX \cite{APPROX}, QUARTZ  \cite{Quartz} and ALPHA \cite{ALPHA}. Recent advances on mini-batch and distributed variants can be found in~\cite{WrightAsynchrous14},~\cite{AccMiniBatchRBCDNIPS14},~\cite{Hydra},~\cite{Hydra2},~\cite{TrofimovGenkin14},~\cite{CoCoa}, ~\cite{DisBCDM} and~\cite{MahajanKeerthiSunda}. Other related work includes~\cite{nemirovski2009robust, AdaGrad, AgarwalBottou14, Pathwisecd,RBCDFT, DQA}.  We also point to~\cite{ReviewWright} for a review on coordinate descent algorithms.

\textbf{Selection Probabilities.} Naturally, both the theoretical convergence rate and practical performance of randomized coordinate descent methods depends on  the probability distribution governing the choice of individual coordinates. While most existing work assumes uniform distribution, it was shown by \citet{UCDC, Necoara:Coupled, IProx-SDCA}  that  coordinate descent works for an arbitrary fixed probability distribution over individual coordinates and even subsets of coordinates \cite{NSync, Quartz, ALPHA, ESO}. In all of these works the theory allows the computation of a fixed probability distribution, known as {\em importance sampling}, which optimizes the complexity bounds. However, such a distribution often depends on unknown quantities, such as the distances of the individual variables from their optimal values \cite{UCDC, ALPHA}. In some cases, such as for smooth strongly convex functions or in the primal-dual setup we consider here, the probabilities forming an importance sampling can be explicitly computed \cite{NSync, IProx-SDCA, Quartz, ALPHA, ESO}. Typically, the theoretical influence of using the importance sampling is in the replacement of the maximum of certain data-dependent quantities in the complexity bound by the average.


\textbf{Adaptivity.} Despite the striking developments in the field, there is virtually no literature on  methods using an {\em adaptive} choice of the probabilities. We are aware of a few pieces of work; but all  resort to heuristics unsupported by theory~\cite{ glasmachers2013accelerated, randomFrankWolfe, Nopesky,NewClassesCD, Adaptivecd}, which unfortunately also means that the methods are sometimes effective, and sometimes not.  {\bf We observe that in the  primal-dual framework we consider, each dual variable can be equipped with a natural measure of progress which we call ``dual residue''. We propose that the selection probabilities be constructed based on these quantities.} 


{\em Outline:} In Section~\ref{s:prob} we summarize the contributions of our work. In Section~\ref{sec:algo} we describe our first, theoretical methods (Algorithm~\ref{alg:A}) and describe the intuition behind it. In Section~\ref{s:conv} we provide  convergence analysis. In Section \ref{s:heuristic} we introduce Algorithm \ref{alg:HA}: an  variant of Algorithm \ref{alg:A} containing heuristic elements which make it efficiently implementable. We conclude with numerical experiments in Section \ref{s:Experiments}. Technical proofs and additional numerical experiments can be found in the appendix.

\section{Contributions} \label{s:prob}

We now briefly highlight the main contributions of this work. 

\textbf{Two algorithms with adaptive probabilities.} We propose two new stochastic dual ascent algorithms: AdaSDCA (Algorithm~\ref{alg:A}) and AdaSDCA+ (Algorithm~\ref{alg:HA}) for solving~\eqref{def:primal} and its dual problem~\eqref{def:dual}. The novelty of our algorithms is in adaptive choice of the probability distribution over the dual coordinates. 

\textbf{Complexity analysis.} We provide a convergence rate analysis for the first method, showing that {\bf AdaSDCA enjoys better rate than the best known rate for SDCA with a fixed   sampling}~\cite{IProx-SDCA, Quartz}. The probabilities are proportional to a certain measure of dual suboptimality associated with each variable.

\textbf{Practical method.} AdaSDCA requires the same computational effort per iteration as the batch gradient algorithm. To solve this issue, we propose AdaSDCA+ (Algorithm~\ref{alg:HA}): an efficient heuristic variant of the AdaSDCA. The computational effort of the heuristic  method in a single iteration is low, which makes it very competitive with  methods based on importance sampling, such as IProx-SDCA~\cite{IProx-SDCA}. We support this with computational experiments in Section~\ref{s:Experiments}.

{\em Outline:} In Section~\ref{s:prob} we summarize the contributions of our work. In Section~\ref{sec:algo} we describe our first, theoretical methods (AdaSDCA) and describe the intuition behind it. In Section~\ref{s:conv} we provide  convergence analysis. In Section \ref{s:heuristic} we introduce AdaSDCA+: a variant of AdaSDCA containing heuristic elements which make it efficiently implementable. We conclude with numerical experiments in Section \ref{s:Experiments}. Technical proofs and additional numerical experiments can be found in the appendix.



\section{The Algorithm: AdaSDCA}\label{sec:algo}

It is well known that the optimal primal-dual pair $(w^*,\alpha^*)\in \R^d\times \R^n$ satisfies the following \textit{optimality conditions}:
\begin{align} \label{eq:optimality}
w^* &= \nabla g^*\left(\frac{1}{\lambda n}A\alpha^*\right)
\\ \alpha^*_i &= -\nabla\phi_i(A_i^\top w^*),\enspace \forall i\in [n]\eqdef \{1, \dots, n\},
\end{align} 
where $A$ is the $d$-by-$n$ matrix with columns $A_1,\dots, A_n$.

\begin{definition}[Dual residue] The \textit{dual residue}, $\kappa=(\kappa_1,\dots,\kappa_n)\in \R^ n$, associated with $(w,\alpha)$  is given by:
 \begin{equation}\label{def:kappa}
\kappa_i \eqdef \alpha_i + \nabla\phi_i(A_i^\top w).
 \end{equation}
 \end{definition}
 
Note, that $\kappa_i^t = 0$ if and only if $\alpha_i$ satisfies \eqref{eq:optimality}. This motivates the design of AdaSDCA (Algorithm~\ref{alg:A}) as follows: whenever  $|\kappa_i^t|$ is large, the $i$th dual coordinate $\alpha_i$ is suboptimal and hence should be updated more often.

\begin{definition}[Coherence] We say that  probability vector $p^t\in \R^n$ is {\em coherent} with the dual residue $\kappa^t$ if for all $i\in [n]$ we have
\[\kappa_i^t\neq 0 \quad \Rightarrow \quad p_i^t >0.\]
\end{definition}

Alternatively, $p^t$ is coherent with $k^t$ if for 
$$
I_t\eqdef \{i\in [n]: \kappa_i^t \neq 0\}\subseteq [n].
$$
we have $\min_{i \in I_t}p_i^t>0$.

\begin{algorithm}
\caption{AdaSDCA} \label{alg:A}
\begin{algorithmic}

\STATE \textbf{Init:} $v_i=A_i^\top A_i$ for  $i\in [n]$; $\alpha^0 \in \R^n$; $\bar \alpha^ 0=\frac{1}{\lambda n} A \alpha^ 0$
\FOR{$t \geq 0$}
\STATE Primal update: $w^t =  \nabla g^* \left(\bar\alpha^{t}\right)$
\STATE Set: $\alpha^{t+1}=\alpha^{t}$
\STATE Compute residue $\kappa^t$: $\kappa_i^t=\alpha_i^t+\nabla \phi_i(A_i^ \top w^ t), \forall i\in [n]$
\STATE Compute probability distribution $p^t$ coherent with $\kappa^t$ 
\STATE Generate random $i_t \in [n]$ according to $p^t$
\STATE Compute: \\
 $\qquad \Delta\alpha_{i_t}^t = \displaystyle\argmax_{\Delta \in \R} \left\{-\phi_{i_t}^*(-(\alpha_{i_t}^{t}+\Delta))\right.$\\
$\qquad \qquad\qquad\qquad\quad\left.-A_{i_t}^\top w^t\Delta-
\frac{v_{i_t}}{2\lambda n}|\Delta|^2\right\}$
\STATE Dual update: $\alpha_{i_t}^{t+1} = \alpha_{i_t}^{t} + \Delta\alpha_{i_t}^t$
\STATE Average update: $\bar \alpha^t=\bar \alpha^t+\frac{\Delta\alpha_{i_t}}{\lambda n}A_{i_t}$
\ENDFOR
\STATE \textbf{Output:} $w^t, \alpha^t$
\end{algorithmic}
\end{algorithm}

AdaSDCA is a stochastic dual coordinate ascent method, with an adaptive probability vector $p^t$, which  could potentially change at every iteration $t$. The primal and dual update rules are exactly the same as in  standard SDCA~\cite{SDCA}, which instead uses uniform sampling probability at every iteration  and does not require the computation of the dual residue $\kappa$.

Our first result highlights a key technical tool which ultimately leads to the development of good adaptive sampling distributions $p^t$ in AdaSDCA. For simplicity we denote by $\Exp_t$ the expectation with respect to the random index $i_t\in [n]$ generated at iteration $t$.

\begin{lemma}\label{l:Theoutof}
Consider the AdaSDCA algorithm during iteration $t\geq 0$ and assume that $p^t$ is coherent with $\kappa^t$. Then
 \begin{align}
&\Exp_t\left[D(\alpha^{t+1})-D(\alpha^{t})\right]- \theta \left(P(w^t)-D(\alpha^t)\right) \notag\\&
\geq -\frac{\theta}{2\lambda n^2}\sum_{i\in I_t}\left(  \frac{\theta(v_i+n\lambda\gamma)}{p_i^t} - n\lambda \gamma \right) |\kappa^t_i|^2 \label{a:Theoutof}
,\end{align}
for arbitrary \begin{align}\label{a:thetaleqmin}0\leq \theta \leq  \min_{i\in I_t} p^t_i.\end{align}
\end{lemma}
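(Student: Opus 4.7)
The plan is to bound the per-iteration dual increase by feeding a cleverly chosen suboptimal step into the quadratic model maximized by the algorithm, apply $\gamma$-strong convexity of $\phi_i^*$, and then match the resulting expectation against the duality gap through the Fenchel--Young identity.

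First I would weaken the SDCA update's optimality into a free test variable. Since $g$ is $1$-strongly convex, $g^*$ is $1$-smooth, and expanding $f(\alpha^t+\Delta e_{i_t})=\lambda g^*(\bar\alpha^t+\tfrac{\Delta}{\lambda n}A_{i_t})$ via smoothness gives
\[
D(\alpha^t+\Delta e_{i_t})-D(\alpha^t)\geq \frac{1}{n}\bigl[H_{i_t}(\Delta)-H_{i_t}(0)\bigr],
\]
where $H_i(\Delta)\eqdef -\phi_i^*(-\alpha_i^t-\Delta)-A_i^\top w^t\Delta-\frac{v_i}{2\lambda n}\Delta^2$ is precisely the function maximized inside AdaSDCA. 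Since $\Delta\alpha_{i_t}^t$ is its argmax, the inequality remains valid for any test $\Delta$ we plug in.

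The key choice is $\Delta=-s\kappa_{i_t}^t$. Writing $\tilde\alpha_i^t\eqdef -\nabla\phi_i(A_i^\top w^t)=\alpha_i^t-\kappa_i^t$, this step has the feature that $-\alpha_i^t-\Delta=(1-s)(-\alpha_i^t)+s(-\tilde\alpha_i^t)$, so $\gamma$-strong convexity of $\phi_i^*$ (dual to $1/\gamma$-smoothness of $\phi_i$) yields
\[
\phi_i^*(-\alpha_i^t-\Delta)\leq (1-s)\phi_i^*(-\alpha_i^t)+s\phi_i^*(-\tilde\alpha_i^t)-\frac{\gamma s(1-s)}{2}|\kappa_i^t|^2.
\]
I then specialize $s=\theta/p_i^t$; the hypothesis $\theta\leq\min_{i\in I_t}p_i^t$ is exactly what makes $s\in(0,1]$ legal, and for $i\notin I_t$ we have $\kappa_i^t=0$ so both sides are zero. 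Substituting back into $H_{i_t}(\Delta)-H_{i_t}(0)$ and taking $\Exp_t$ (each $i$ picks up a factor $p_i^t$ that cancels one $1/p_i^t$) yields
\[
\Exp_t[D(\alpha^{t+1})-D(\alpha^t)]\geq \frac{\theta}{n}\sum_{i\in I_t}G_i^t+\frac{\theta}{2n}\sum_{i\in I_t}\left[\gamma-\frac{\theta(v_i+n\lambda\gamma)}{\lambda n\,p_i^t}\right]|\kappa_i^t|^2,
\]
where $G_i^t\eqdef A_i^\top w^t\,\kappa_i^t+\phi_i^*(-\alpha_i^t)-\phi_i^*(-\tilde\alpha_i^t)$.

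To finish, I would identify $\frac{1}{n}\sum_i G_i^t$ with the duality gap. The primal relation $w^t=\nabla g^*(\bar\alpha^t)$ gives $\lambda g(w^t)+\lambda g^*(\bar\alpha^t)=\tfrac{1}{n}\sum_i \alpha_i^t A_i^\top w^t$, while $-\tilde\alpha_i^t=\nabla\phi_i(A_i^\top w^t)$ gives $\phi_i(A_i^\top w^t)+\phi_i^*(-\tilde\alpha_i^t)=-\tilde\alpha_i^t A_i^\top w^t$. Summing these produces $P(w^t)-D(\alpha^t)=\frac{1}{n}\sum_i G_i^t$, and since $G_i^t=0$ whenever $\kappa_i^t=0$ the restricted sum over $I_t$ already gives the full gap. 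Subtracting $\theta(P(w^t)-D(\alpha^t))$ from the expected-gain bound and rearranging the bracketed coefficient reproduces \eqref{a:Theoutof} exactly. The main obstacle is conceptual rather than computational: discovering the probability-weighted test step $\Delta=-(\theta/p_i^t)\kappa_i^t$, which is the single ingredient that simultaneously (i) converts per-coordinate strong-convexity gains into the duality gap after expectation and (ii) leaves behind the $|\kappa_i^t|^2/p_i^t$ residual that later drives the choice of the adaptive sampling distribution.
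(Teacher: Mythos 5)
Your proposal is correct and follows essentially the same route as the paper's proof: lower-bound the dual increase via $1$-smoothness of $g^*$ and the argmax property, plug in the test step $\Delta=-(\theta/p_{i_t}^t)\kappa_{i_t}^t$, invoke $\gamma$-strong convexity of $\phi_i^*$, take expectation, and identify the duality gap through the Fenchel--Young equalities. The only difference is cosmetic packaging (you bound $D$ directly through the model $H_{i_t}$, whereas the paper first states the smoothness bound for $f$ separately).
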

\begin{proof}
Lemma~\ref{l:Theoutof} is proved similarly to Lemma 2 in~\cite{IProx-SDCA}, but in a slightly more general setting. For completeness, we provide the proof in the appendix.
\end{proof}

Lemma~\ref{l:Theoutof} plays a key role in the analysis of stochastic dual coordinate methods~\cite{SDCA, IProx-SDCA, ASDCA}. 
Indeed, if the right-hand side of~\eqref{a:Theoutof} is positive, then the 
primal dual error  $P(w^t)-D(\alpha^t)$ can be bounded 
by the expected dual ascent $\Exp_t[D(\alpha^{t+1})-D(\alpha^t)]$ times $1/\theta$, which yields the contraction of the dual error at the rate 
of $1-\theta$ (see Theorem~\ref{th:general}). In order to make the right-hand side of~\eqref{a:Theoutof} positive we can
take any $\theta$ smaller than $\theta(\kappa^t,p^t)$ where the function $\theta(\cdot,\cdot):\R_{+}^n\times \R_+^n\rightarrow \R$ is defined by:
\begin{align}
\theta(\kappa,p)\equiv
\frac{n\lambda \gamma \sum_{i:\kappa_i\neq 0} |\kappa_i|^2}{\sum_{i: \kappa_i\neq 0} p_i^{-1} |\kappa_i|^2(v_i+n\lambda \gamma)}
.\end{align}
We also need to make sure that $0\leq \theta\leq \min_{i\in I_t} p_i^t$ in order to apply Lemma~\ref{l:Theoutof}.
A ``good'' adaptive probability $p^t$ should then be the solution of the following optimization problem:

\begin{eqnarray}\label{eq:optp}
 &\displaystyle\max_{p\in \R_+^ n}& \theta(\kappa^t,p)\\ \notag
\quad &\mathrm{s.t.}&  \sum_{i=1}^ n p_i =1\\ \notag & & \theta (\kappa^t, p)\leq \min_{i: \kappa^t_i\neq 0} p_i
\end{eqnarray}

A feasible solution to \eqref{eq:optp} is the \textit{importance sampling} (also known as optimal serial sampling) $p^*$ defined by:
\begin{align}\label{a:pstar}
p_i^*\eqdef\frac{v_i+n\lambda \gamma}{\sum_{j=1}^n\left(v_j+n\lambda \gamma\right)},\enspace \forall i\in [n],
\end{align}
which was proposed in~\cite{IProx-SDCA} to obtain proximal stochastic dual coordinate ascent method with importance sampling (IProx-SDCA). The same optimal probability vector was also deduced, via different means and in a more general setting in~\cite{Quartz}. Note that in this special case, since $p^t$ is independent of the residue $\kappa^t$,  the computation of $\kappa^t$ is unnecessary and hence the complexity of each iteration does not scale up with $n$.  

It seems difficult to identify other feasible solutions to program~\eqref{eq:optp} apart from $p^*$, not to mention solve it exactly.
However, by relaxing the constraint $\theta (\kappa^t, p)\leq \min_{i: \kappa^t_i\neq 0} p_i$, we obtain an explicit optimal 
solution.

\begin{lemma}\label{lem:pstar}
 The optimal solution $ p^*(\kappa^t)$ of 
\begin{eqnarray}\label{eq:optprelaxed}
 &\displaystyle\max_{p\in \R_+^ n}& \theta(\kappa^t,p)\\ \notag
\quad &\mathrm{s.t.}&  \sum_{i=1}^ n p_i =1
\end{eqnarray}
is:
\begin{align}\label{a:optipsdf}
 (p^*(\kappa^t))_i=\frac{|\kappa_i^t|\sqrt{v_i + n\lambda\gamma}}{\sum_{j=1}^n |\kappa_j^t| \sqrt{v_j + n\lambda\gamma}},\enspace \forall i\in [n].
\end{align}
\end{lemma}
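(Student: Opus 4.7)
The key observation is that in the definition of $\theta(\kappa^t,p)$, the numerator $n\lambda\gamma\sum_{i:\kappa_i^t\neq 0}|\kappa_i^t|^2$ is independent of $p$. Hence maximizing $\theta(\kappa^t,p)$ over the probability simplex is equivalent to minimizing the denominator
\[
F(p)\eqdef \sum_{i\in I_t} \frac{|\kappa_i^t|^2(v_i+n\lambda\gamma)}{p_i}
\]
subject to $\sum_{i=1}^n p_i=1$ and $p_i\geq 0$, where $I_t=\{i:\kappa_i^t\neq 0\}$. Note that at the optimum we must have $p_i>0$ for every $i\in I_t$, since otherwise $F(p)=+\infty$; conversely, the indices outside $I_t$ contribute nothing to $F$, so we may (and should) put zero mass on them in order to keep as much probability on $I_t$ as possible.

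The plan is to apply the Cauchy--Schwarz inequality with $a_i=|\kappa_i^t|\sqrt{v_i+n\lambda\gamma}$ and weights $p_i>0$ for $i\in I_t$:
\[
\Bigl(\sum_{i\in I_t} a_i\Bigr)^2 = \Bigl(\sum_{i\in I_t} \frac{a_i}{\sqrt{p_i}}\cdot \sqrt{p_i}\Bigr)^2 \leq \Bigl(\sum_{i\in I_t} \frac{a_i^2}{p_i}\Bigr)\Bigl(\sum_{i\in I_t} p_i\Bigr) \leq F(p),
\]
using $\sum_{i\in I_t} p_i\leq 1$. This gives the sharp lower bound
\[
F(p)\geq \Bigl(\sum_{i\in I_t}|\kappa_i^t|\sqrt{v_i+n\lambda\gamma}\Bigr)^2,
\]
independent of $p$. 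Equality in Cauchy--Schwarz forces $a_i/\sqrt{p_i}$ proportional to $\sqrt{p_i}$, i.e.\ $p_i\propto a_i=|\kappa_i^t|\sqrt{v_i+n\lambda\gamma}$ for $i\in I_t$; combined with $\sum_i p_i=1$ and $p_i=0$ outside $I_t$ (which is exactly what the candidate formula~\eqref{a:optipsdf} encodes, since there $|\kappa_i^t|=0$), this gives precisely the stated $p^*(\kappa^t)$. Finally, I would check feasibility: the resulting $p^*(\kappa^t)$ has nonnegative entries summing to one, so it lies in the feasible set, and by the sharpness of Cauchy--Schwarz it attains the lower bound, hence minimizes $F$ and therefore maximizes $\theta(\kappa^t,\cdot)$.

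An equivalent route would be to write the Lagrangian $L(p,\mu)=F(p)+\mu(\sum_i p_i-1)$, set $\partial L/\partial p_i=-|\kappa_i^t|^2(v_i+n\lambda\gamma)/p_i^2+\mu=0$, solve for $p_i$, and normalize using the constraint; convexity of $F$ on the open simplex (it is a sum of convex functions $c_i/p_i$) guarantees that this stationary point is the global minimum. No real obstacle is expected: the problem reduces to a textbook minimization $\min \sum c_i/p_i$ subject to $\sum p_i=1$, whose solution is $p_i\propto\sqrt{c_i}$, applied with $c_i=|\kappa_i^t|^2(v_i+n\lambda\gamma)$. The only mild subtlety is handling $i\notin I_t$ (where $\kappa_i^t=0$), which is dealt with by the observation above that such coordinates do not enter $F$ and receive zero probability in~\eqref{a:optipsdf}.
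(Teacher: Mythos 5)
Your proof is correct, and it takes a genuinely different route from the paper. The paper forms the Lagrangian $L(p,\eta)=\theta(\kappa,p)-\eta(\sum_i p_i-1)$, writes down the first-order stationarity conditions $|\kappa_i^t|^2(v_i+n\lambda\gamma)/p_i^2=|\kappa_j^t|^2(v_j+n\lambda\gamma)/p_j^2$ together with the simplex constraints, and asserts that the claimed formula solves them; it does not explicitly argue that this stationary point is a global maximizer, nor does it separately discuss the coordinates with $\kappa_i^t=0$. You instead observe that the numerator of $\theta$ is independent of $p$, reduce the problem to minimizing the convex denominator $F(p)=\sum_{i\in I_t}|\kappa_i^t|^2(v_i+n\lambda\gamma)/p_i$, and obtain a sharp $p$-independent lower bound via Cauchy--Schwarz, with the equality case identifying $p_i\propto|\kappa_i^t|\sqrt{v_i+n\lambda\gamma}$. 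This buys you a self-contained certificate of global optimality (no appeal to first-order conditions being sufficient), and it cleanly handles the indices outside $I_t$, which must receive zero mass at the optimum and do so automatically in~\eqref{a:optipsdf}. Your remark that the Lagrangian route also works, justified by convexity of $F$ on the open simplex, is essentially the paper's argument made rigorous. The only degenerate case neither you nor the paper addresses is $\kappa^t=0$ (empty $I_t$), where the formula is vacuous; this is harmless since the algorithm has then converged.
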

\begin{proof}
The proof is deferred to the appendix. 
\end{proof}

The suggestion made by~\eqref{a:optipsdf} is clear: we should update
more often those dual coordinates $\alpha_i$ which have  large absolute dual residue  $|\kappa_i^t|$ and/or large Lipschitz constant $v_i$.

If we let $p^t=p^*(\kappa^t)$ and $\theta=\theta (\kappa^t, p^t)$,  the constraint~\eqref{a:thetaleqmin} may not be sastified, 
in which case~\eqref{a:Theoutof}
does not necessarily hold. 
However, as shown by the next lemma, the constraint~\eqref{a:thetaleqmin} 
is not required for obtaining~\eqref{a:Theoutof} when all the functions $\{\phi_i\}_i$ are quadratic.

\begin{lemma}\label{l:Theoutofquadratic}Suppose that all $\{\phi_i\}_i$ are quadratic. Let $t\geq 0$.
If  $\min_{i\in I_t} p_i^t>0$, then~\eqref{a:Theoutof} holds for any $\theta \in [0, +\infty)$.
\end{lemma}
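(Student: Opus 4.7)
My plan is to revisit the proof of Lemma~\ref{l:Theoutof} (given in the appendix) and isolate the single step that forces the restriction $\theta\le\min_{i\in I_t}p_i^t$; then to observe that this step carries through unconditionally under the quadratic hypothesis.

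Recall the structure of the proof of Lemma~\ref{l:Theoutof}. One starts from the $1$-smoothness of $g^*$, which gives the per-coordinate lower bound
\[
D(\alpha^{t+1})-D(\alpha^t)\ge \tfrac{1}{n}\bigl[-\phi_{i_t}^*(-\alpha_{i_t}^t-\Delta)+\phi_{i_t}^*(-\alpha_{i_t}^t)-A_{i_t}^\top w^t\Delta-\tfrac{v_{i_t}}{2\lambda n}\Delta^2\bigr]
\]
valid for every $\Delta$. Since the algorithm maximises the bracket, one may substitute the specific candidate $\Delta_i^{\mathrm{cand}}=-(\theta/p_i^t)\kappa_i^t$ at each $i\in I_t$, take $\Exp_t$, and rearrange. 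The only inequality in the resulting chain that demands $\theta/p_i^t\in[0,1]$ is the Jensen form of $\gamma$-strong convexity of $\phi_i^*$ applied at the convex combination
\[
-\alpha_i^t-\Delta_i^{\mathrm{cand}}=\bigl(1-\tfrac{\theta}{p_i^t}\bigr)(-\alpha_i^t)+\tfrac{\theta}{p_i^t}(-u_i),\qquad u_i\eqdef -\nabla\phi_i(A_i^\top w^t).
\]
All the other steps (Fenchel--Young \emph{equality} at $A_i^\top w^t$ to rewrite $\phi_i^*(-u_i)$, cancellation of the cross terms in $A_i^\top w^t$, and the final algebraic grouping into the right-hand side of~\eqref{a:Theoutof}) are oblivious to the sign or size of $\theta$.

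The quadratic assumption removes precisely this one obstruction. When $\phi_i$ is quadratic, $\phi_i^*$ is a quadratic with leading coefficient $\gamma$, and by a direct expansion the Jensen-type inequality for a quadratic is in fact an \emph{equality},
\[
\phi_i^*\bigl((1-\beta)x+\beta y\bigr)=(1-\beta)\phi_i^*(x)+\beta\phi_i^*(y)-\tfrac{\gamma}{2}\beta(1-\beta)(y-x)^2,\qquad \beta\in\R.
\]
In particular, plugging $\beta=\theta/p_i^t$ is legal for any $\theta\ge 0$ as soon as $p_i^t>0$ for every $i\in I_t$. Inserting this equality in place of the Jensen inequality in the proof of Lemma~\ref{l:Theoutof} reproduces the same chain of estimates verbatim, now without the restriction $\theta\le\min_{i\in I_t}p_i^t$, delivering~\eqref{a:Theoutof} for every $\theta\in[0,+\infty)$.

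The one point I would verify carefully, and which I expect to be the main technical check, is that the leading coefficient of the quadratic $\phi_i^*$ is precisely the constant $\gamma$ appearing in~\eqref{a:Theoutof}; this is the natural pairing between the $1/\gamma$-smoothness of the quadratic $\phi_i$ and the $\gamma$-strong convexity of its Fenchel conjugate, and without this matching no clean substitution into the algebra of Lemma~\ref{l:Theoutof} would be possible. Once this matching is in hand, nothing in the remaining argument distinguishes small $\theta$ from large $\theta$, and the conclusion follows.
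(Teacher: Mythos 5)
Your proposal is correct and follows essentially the same route as the paper's own proof: both identify the strong-convexity inequality~\eqref{a-deerefdf} as the sole step requiring $s_i=\theta/p_i^t\in[0,1]$, and both observe that for quadratic $\phi_i$ this relation holds (indeed as an identity) for arbitrary $s_i\in\R$, so the restriction on $\theta$ can be dropped. Your closing remark that the leading coefficient of $\phi_i^*$ must match the $\gamma$ in the bound is a worthwhile sanity check that the paper leaves implicit, but it does not change the argument.
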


The proof is deferred to Appendix.

\section{Convergence results} \label{s:conv}

In this section we present our theoretical complexity results for AdaSDCA. The main results are formulated in Theorem~\ref{th:general}, covering  the
general case, and in Theorem~\ref{th:quadratic} in the special case when $\{\phi_i\}_{i=1}^n$ are all quadratic.

\subsection{General loss functions}

We derive the convergence result from Lemma~\ref{l:Theoutof}.
\begin{proposition}\label{prop-d1}
Let $t\geq 0$. If $\min_{i\in I_t} p_i^t>0$ and $\theta(\kappa^t, p^t)\leq \min_{i\in I_t} p_i^t$, then \begin{align*}
\Exp_t\left[D(\alpha^{t+1})-D(\alpha^{t})\right]\geq  \theta(\kappa^t,p^t) \left(P(w^t)-D(\alpha^t)\right).\end{align*}
\end{proposition}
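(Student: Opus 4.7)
The plan is to invoke Lemma~\ref{l:Theoutof} with the specific choice $\theta = \theta(\kappa^t, p^t)$ and verify that this choice makes the right-hand side of~\eqref{a:Theoutof} vanish identically.

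First I would check the two hypotheses needed to apply Lemma~\ref{l:Theoutof}. Coherence of $p^t$ with $\kappa^t$ follows directly from the assumption $\min_{i\in I_t} p_i^t > 0$, since by definition coherence only requires $p_i^t > 0$ on the support $I_t$ of $\kappa^t$. Nonnegativity of $\theta(\kappa^t, p^t)$ is immediate from its definition (all terms in numerator and denominator are nonnegative, and the denominator is strictly positive whenever $I_t \neq \emptyset$; the case $I_t = \emptyset$ is trivial since then $\kappa^t = 0$ implies primal-dual optimality so both sides are zero). The upper bound $\theta(\kappa^t, p^t) \leq \min_{i\in I_t} p_i^t$ is precisely the second hypothesis of the proposition. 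So the condition~\eqref{a:thetaleqmin} is satisfied with $\theta = \theta(\kappa^t, p^t)$.

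The main step is to show that the error term on the right-hand side of~\eqref{a:Theoutof} vanishes at this $\theta$. Plugging $\theta = \theta(\kappa^t, p^t)$ into the bracket and using the definition of $\theta(\kappa, p)$, we get
\begin{align*}
\theta(\kappa^t,p^t) \sum_{i \in I_t} \frac{v_i + n\lambda\gamma}{p_i^t} |\kappa_i^t|^2
&= n\lambda\gamma \sum_{i \in I_t} |\kappa_i^t|^2,
\end{align*}
which is exactly the identity built into the definition of $\theta(\kappa^t, p^t)$. Consequently,
\begin{align*}
\sum_{i\in I_t}\left(\frac{\theta(\kappa^t,p^t)(v_i+n\lambda\gamma)}{p_i^t} - n\lambda\gamma\right)|\kappa_i^t|^2 = 0,
\end{align*}
so the whole error term in~\eqref{a:Theoutof} is zero. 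Rearranging the resulting inequality
\begin{align*}
\Exp_t[D(\alpha^{t+1}) - D(\alpha^t)] - \theta(\kappa^t, p^t)\bigl(P(w^t) - D(\alpha^t)\bigr) \geq 0
\end{align*}
yields the claim.

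There is no real obstacle here; the proposition is essentially a corollary of Lemma~\ref{l:Theoutof}, and the only thing to be careful about is that $\theta(\kappa^t, p^t)$ was deliberately engineered (in Lemma~\ref{lem:pstar} and the surrounding discussion) so that the error term in~\eqref{a:Theoutof} vanishes. The two hypotheses in the proposition are exactly the minimal conditions ensuring that Lemma~\ref{l:Theoutof} can be applied with this choice of $\theta$.
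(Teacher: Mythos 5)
Your proposal is correct and follows exactly the paper's own argument: apply Lemma~\ref{l:Theoutof} with $\theta = \theta(\kappa^t,p^t)$ (the hypotheses of the proposition being precisely what is needed to make this choice admissible) and observe that the definition of $\theta(\kappa,p)$ makes the error term on the right-hand side of~\eqref{a:Theoutof} vanish. The only difference is that you spell out the verification the paper leaves implicit.
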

\begin{proof}
 This follows directly from Lemma~\ref{l:Theoutof} and the fact that the right-hand side of~\eqref{a:Theoutof} equals 0 when $\theta=\theta(\kappa^t, p^t)$.
\end{proof}

\begin{theorem}\label{th:general}
 Consider AdaSDCA. If at each iteration $t\geq 0$, $\min_{i\in I_t} p_i^t>0$ and $\theta(\kappa^t, p^t)\leq \min_{i\in I_t} p_i^t$, then 
\begin{align}\label{a:esdfdf}
\Exp[P(w^t)-D(\alpha^t)]
\leq \frac{1}{\tilde \theta_t}  \prod_{k=0}^t(1-\tilde \theta_k) \left(D(\alpha^*)-D(\alpha^0)\right),
\end{align}
for all $t\geq 0$
where
\begin{align}\label{a:tildetheta}
\tilde \theta_t\eqdef \frac{\Exp[\theta(\kappa^t, p^t)(P(w^t)-D(\alpha^t))]}{\Exp[P(w^t)-D(\alpha^t)]}.
\end{align}
\end{theorem}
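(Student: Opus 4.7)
\textbf{Proof Plan for Theorem~\ref{th:general}.}

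The plan is to combine Proposition~\ref{prop-d1} with weak duality and obtain a contraction on the expected dual suboptimality, then convert this into a bound on the expected duality gap. All the hard analytic work has already been carried out in Lemma~\ref{l:Theoutof} and distilled into Proposition~\ref{prop-d1}; what remains is a two-layer induction argument together with one subtle expectation manipulation to accommodate the randomness of $\theta(\kappa^t,p^t)$.

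First I would introduce the shorthand $\delta_t^D \eqdef \Exp[D(\alpha^*)-D(\alpha^t)]$ for the expected dual suboptimality and $\delta_t^{\mathrm{gap}} \eqdef \Exp[P(w^t)-D(\alpha^t)]$ for the expected duality gap. By weak duality we have $P(w^t)\geq D(\alpha^*)$, hence $\delta_t^{\mathrm{gap}}\geq \delta_t^D\geq 0$. The standing assumptions of the theorem ensure that the hypotheses of Proposition~\ref{prop-d1} hold at every iteration $t$; taking the unconditional expectation of the inequality in that proposition via the tower property gives
\[
\Exp[D(\alpha^{t+1})-D(\alpha^t)] \;\geq\; \Exp\bigl[\theta(\kappa^t,p^t)(P(w^t)-D(\alpha^t))\bigr].
\]
The critical step here is to recognize that the right-hand side is, by the very definition of $\tilde\theta_t$ in~\eqref{a:tildetheta}, exactly $\tilde\theta_t\,\delta_t^{\mathrm{gap}}$. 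This is the place where the ratio-of-expectations definition pays off: it turns the awkward expectation of a product of two correlated random quantities into a clean multiplicative factor.

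Rewriting the displayed inequality as $\delta_t^D-\delta_{t+1}^D \geq \tilde\theta_t\,\delta_t^{\mathrm{gap}}$ and invoking weak duality $\delta_t^{\mathrm{gap}}\geq \delta_t^D$ yields the one-step contraction
\[
\delta_{t+1}^D \;\leq\; (1-\tilde\theta_t)\,\delta_t^D.
\]
Iterating this from $k=0$ up through $k=t-1$ (with $\delta_0^D\leq D(\alpha^*)-D(\alpha^0)$, since $\alpha^0$ is deterministic) gives the geometric-type bound $\delta_t^D \leq \prod_{k=0}^{t-1}(1-\tilde\theta_k)\,(D(\alpha^*)-D(\alpha^0))$. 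Finally, to convert this into a gap bound I would use the same one-step inequality $\tilde\theta_t\,\delta_t^{\mathrm{gap}}\leq \delta_t^D-\delta_{t+1}^D\leq \delta_t^D$, i.e.\ $\delta_t^{\mathrm{gap}}\leq \delta_t^D/\tilde\theta_t$, which combined with the previous display yields the claimed inequality~\eqref{a:esdfdf} (up to the usual off-by-one in whether the product runs to $t-1$ or $t$, which only affects the bound by a harmless factor $(1-\tilde\theta_t)\leq 1$).

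The main obstacle is the expectation manipulation in the first step: because $\kappa^t$ and $p^t$ are themselves random (they depend on the history of sampled indices), one cannot simply pull a constant $\theta$ out of the expectation, and the denominator of $\tilde\theta_t$ may a priori vanish. I would handle the second issue by noting that if $\delta_t^{\mathrm{gap}}=0$ the algorithm has already converged and the bound holds trivially; otherwise $\tilde\theta_t$ is a well-defined positive number, and the definition is exactly tailored so that the full-expectation version of Proposition~\ref{prop-d1} reads $\delta_t^D-\delta_{t+1}^D\geq \tilde\theta_t\,\delta_t^{\mathrm{gap}}$. Once this is in place, the remainder is a short, standard geometric-decay induction.
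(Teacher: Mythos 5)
Your proposal follows essentially the same route as the paper's own proof: apply Proposition~\ref{prop-d1}, use the ratio definition of $\tilde\theta_t$ to turn the expectation of the product into $\tilde\theta_t\,\Exp[P(w^t)-D(\alpha^t)]$, contract the expected dual suboptimality via weak duality, iterate, and convert back to the duality gap by dividing by $\tilde\theta_t$. The off-by-one in the product index that you flag is present in the paper's argument as well (its recursion actually yields $\prod_{k=0}^{t-1}$ on the right-hand side when the left-hand side is indexed by $t$), so your treatment is, if anything, slightly more careful.
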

\begin{proof}
By Proposition~\ref{prop-d1}, we know that 
\begin{align}\notag
\Exp[D(\alpha^{t+1})-D(\alpha^t)]
&\geq \Exp[\theta(\kappa^t, p^t)(P(w^t)-D(\alpha^t))]
\\& \overset{\eqref{a:tildetheta}}{=} \tilde \theta_t \Exp[P(w^t)-D(\alpha^t)]  \label{a:thetaPwa}
\\ & \geq \tilde \theta_t \Exp[D(\alpha^*)-D(\alpha^t)]\notag,
\end{align}
whence 
$$
\Exp[D(\alpha^*)-D(\alpha^{t+1})] \leq (1-\tilde \theta_t) \Exp[D(\alpha^*)-D(\alpha^t)].
$$
Therefore,
$$
\Exp[D(\alpha^*)-D(\alpha^t)]\leq \prod_{k=0}^t (1-\tilde \theta_k) \left(D(\alpha^*)-D(\alpha^0)\right).
$$
By plugging the last bound into~\eqref{a:thetaPwa} we get the bound on the primal dual error:
\begin{align*}
\Exp[P(w^t)-D(\alpha^t)]&\leq \frac{1}{\tilde \theta_t} \Exp[D(\alpha^{t+1})-D(\alpha^t)]\\& \leq \frac{1}{\tilde \theta_t} \Exp[D(\alpha^*)-D(\alpha^t)]
\\ & \leq \frac{1}{\tilde \theta_t}\prod_{k=0}^t (1-\tilde \theta_k) \left(D(\alpha^*)-D(\alpha^0)\right).\qedhere
\end{align*}

\end{proof}

As mentioned in Section~\ref{sec:algo}, by letting every sampling probability $p^t$ be the importance sampling (optimal serial sampling) $p^*$ defined in~\eqref{a:pstar}, AdaSDCA reduces to 
IProx-SDCA proposed in~\cite{IProx-SDCA}. The convergence theory established for IProx-SDCA in~\cite{IProx-SDCA}, which can also be derived as a direct corollary of our Theorem~\ref{th:general}, is stated as follows.

\begin{theorem}[\cite{IProx-SDCA}]
Consider AdaSDCA with $p^t=p^*$ defined in~\eqref{a:pstar} for all $t\geq 0$. Then 
\begin{align*}
\Exp[P(w^t)-D(\alpha^t)]
\leq \frac{1}{\theta_*}  (1-\theta_*)^t \left(D(\alpha^*)-D(\alpha^0)\right),
\end{align*}
where $$\theta_*=\frac{n\lambda \gamma}{\sum_{i=1}^n (v_i+\lambda \gamma n )}.$$
\end{theorem}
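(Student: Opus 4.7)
The plan is to derive this as an immediate corollary of Theorem~\ref{th:general} by specializing all the adaptive quantities to the constant choice $p^t = p^*$. The key observation is that with this particular sampling, the quantity $\theta(\kappa^t,p^*)$ becomes independent of both $t$ and $\kappa^t$.

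First I would substitute the definition \eqref{a:pstar} of $p^*$ directly into the formula for $\theta(\kappa,p)$. Since
\[
\frac{v_i + n\lambda\gamma}{p_i^*} = \sum_{j=1}^n (v_j + n\lambda\gamma),
\]
the factor $p_i^{-1}(v_i+n\lambda\gamma)$ is the same constant for every index $i$, so it pulls out of the sum in the denominator and cancels the residue-dependent sum in the numerator. A short calculation then yields
\[
\theta(\kappa^t, p^*) \;=\; \frac{n\lambda\gamma}{\sum_{j=1}^n (v_j + n\lambda\gamma)} \;=\; \theta_*,
\]
regardless of the iterate.

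Next I would verify the two hypotheses of Theorem~\ref{th:general}. Since $v_i \ge 0$, each component of $p^*$ satisfies $p_i^* \ge \theta_*$, so in particular $\min_{i\in I_t} p_i^t > 0$ and, more importantly, $\theta(\kappa^t,p^t) = \theta_* \le \min_{i\in I_t} p_i^*$. Thus the conditions are met at every iteration $t$, independent of the trajectory of $\kappa^t$.

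Finally, plugging $\theta(\kappa^t,p^t) = \theta_*$ into the definition \eqref{a:tildetheta} gives $\tilde\theta_t = \theta_*$ for all $t$, after which the general bound of Theorem~\ref{th:general} collapses to
\[
\Exp[P(w^t)-D(\alpha^t)] \le \frac{1}{\theta_*}(1-\theta_*)^{t}\bigl(D(\alpha^*)-D(\alpha^0)\bigr),
\]
which is exactly the claimed rate. There is no real obstacle here: the only substantive ingredients are the algebraic cancellation that makes $\theta(\kappa^t,p^*)$ residue-free and the trivial inequality $p_i^* \ge \theta_*$; everything else is a direct application of the already-established Theorem~\ref{th:general}.
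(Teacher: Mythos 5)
Your derivation is correct and follows exactly the route the paper indicates: the paper does not write out a proof but states that this theorem ``can also be derived as a direct corollary of Theorem~\ref{th:general}'', and your argument --- the cancellation $p_i^{*\,-1}(v_i+n\lambda\gamma)=\sum_j(v_j+n\lambda\gamma)$ making $\theta(\kappa^t,p^*)=\theta_*$ constant, the inequality $p_i^*\geq\theta_*$ verifying the hypotheses, and $\tilde\theta_t=\theta_*$ --- is precisely that derivation. The only cosmetic remark is that the product $\prod_{k=0}^{t}(1-\tilde\theta_k)$ actually gives the marginally stronger factor $(1-\theta_*)^{t+1}$, which of course implies the stated $(1-\theta_*)^{t}$ bound.
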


The next corollary suggests that a {\bf better convergence rate than IProx-SDCA can be achieved by using properly chosen adaptive sampling probability.} 

\begin{corollary}\label{coro:general}
Consider AdaSDCA.
If at each iteration $t\geq 0$, $p_t$ is the optimal solution of~\eqref{eq:optp}, then~\eqref{a:esdfdf} holds and $\tilde \theta_t \geq \theta_*$ for all $t\geq 0$.
\end{corollary}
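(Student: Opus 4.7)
The plan is to show that the importance sampling $p^*$ from \eqref{a:pstar} is feasible for the optimization program \eqref{eq:optp}, and in fact achieves objective value exactly $\theta_*$. Optimality of $p^t$ then immediately forces $\theta(\kappa^t,p^t)\geq \theta_*$ pointwise, from which both conclusions of the corollary follow.

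First I would verify feasibility of $p^*$. Since $v_i = A_i^\top A_i \geq 0$ and $n\lambda\gamma>0$, we have $p_i^* > 0$ for all $i$, so the coherence condition $\min_{i\in I_t} p_i^t > 0$ is automatic. For the second constraint, a direct substitution of $p_i^* = (v_i+n\lambda\gamma)/S$ with $S \eqdef \sum_j (v_j + n\lambda\gamma)$ into the definition of $\theta(\cdot,\cdot)$ yields
\begin{align*}
\theta(\kappa^t, p^*)
&= \frac{n\lambda\gamma \sum_{i\in I_t}|\kappa_i^t|^2}{\sum_{i\in I_t} \tfrac{S}{v_i+n\lambda\gamma}(v_i+n\lambda\gamma)|\kappa_i^t|^2}
= \frac{n\lambda\gamma}{S} = \theta_*.
\end{align*}
The feasibility constraint $\theta(\kappa^t,p^*)\leq \min_{i\in I_t} p_i^*$ becomes $n\lambda\gamma/S \leq (v_i+n\lambda\gamma)/S$ for all $i\in I_t$, which holds since $v_i\geq 0$. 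Hence $p^*$ is always a feasible point of~\eqref{eq:optp}.

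Since $p^t$ is by assumption an optimal solution of \eqref{eq:optp}, it is feasible, so the hypotheses $\min_{i\in I_t} p_i^t>0$ and $\theta(\kappa^t,p^t)\leq \min_{i\in I_t} p_i^t$ of Theorem~\ref{th:general} are satisfied at every iteration. Thus \eqref{a:esdfdf} holds. Moreover, by optimality and the fact that $p^*$ is feasible with value $\theta_*$, we get the pointwise bound $\theta(\kappa^t, p^t)\geq \theta(\kappa^t, p^*) = \theta_*$ for every realization of the iterates.

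Finally, to deduce $\tilde\theta_t \geq \theta_*$, I would use this pointwise inequality inside the expectation defining $\tilde\theta_t$ in~\eqref{a:tildetheta}, together with weak duality $P(w^t)-D(\alpha^t)\geq 0$, to conclude
\[\tilde\theta_t = \frac{\Exp[\theta(\kappa^t,p^t)(P(w^t)-D(\alpha^t))]}{\Exp[P(w^t)-D(\alpha^t)]} \geq \theta_*.\]
The main obstacle is really just the algebraic simplification in the feasibility check of $p^*$; the rest is a clean optimality-plus-monotonicity argument. No new estimates beyond what \eqref{a:pstar}, Theorem~\ref{th:general} and the definition of $\theta(\cdot,\cdot)$ already provide are needed.
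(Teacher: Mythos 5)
Your proposal is correct and matches the argument the paper intends (the corollary is stated without an explicit proof, but the feasibility of $p^*$ for~\eqref{eq:optp} with value $\theta_*$ is exactly the fact established in Section~\ref{sec:algo} that the corollary rests on): optimality of $p^t$ gives the pointwise bound $\theta(\kappa^t,p^t)\geq\theta(\kappa^t,p^*)=\theta_*$, feasibility supplies the hypotheses of Theorem~\ref{th:general}, and weak duality transfers the pointwise bound into $\tilde\theta_t\geq\theta_*$. Note only that $\min_{i\in I_t}p_i^t>0$ is not a consequence of feasibility alone but of the bound $\min_{i\in I_t}p_i^t\geq\theta(\kappa^t,p^t)\geq\theta_*>0$, which your argument already provides.
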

However, solving~\eqref{eq:optp} requires large computational effort, because of the dimension $n$ and the non-convex structure of the program. 
 We show in the next section that when all the loss functions $\{\phi_i\}_i$ are quadratic, then we can get better convergence rate in theory than IProx-SDCA by using the optimal solution of~\eqref{eq:optprelaxed}.

\subsection{Quadratic loss functions}
The main difficulty of solving~\eqref{eq:optp} comes from the inequality constraint, which originates from~\eqref{a:thetaleqmin}.
In this section we mainly show that the constraint~\eqref{a:thetaleqmin} can be released if all $\{\phi_i\}_i$ are quadratic.

\begin{proposition}\label{prop-d2} Suppose that all $\{\phi_i\}_i$ are quadratic.
Let $t\geq 0$. If $\min_{i\in I_t} p_i^t>0$, then \begin{align*}
\Exp_t\left[D(\alpha^{t+1})-D(\alpha^{t})\right]\geq  \theta(\kappa^t,p^t) \left(P(w^t)-D(\alpha^t)\right).\end{align*}
\end{proposition}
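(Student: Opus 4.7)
The plan is to reduce this statement directly to Lemma~\ref{l:Theoutofquadratic}, which is the quadratic analogue of Lemma~\ref{l:Theoutof} but crucially drops the constraint $\theta \le \min_{i\in I_t} p_i^t$. The only thing Proposition~\ref{prop-d1} used from Lemma~\ref{l:Theoutof} was that the right-hand side of~\eqref{a:Theoutof} vanishes at the specific choice $\theta=\theta(\kappa^t,p^t)$. So the same argument goes through verbatim in the quadratic setting, provided the admissibility of that choice of $\theta$ is ensured, which is exactly the content of Lemma~\ref{l:Theoutofquadratic}.

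Concretely, I would proceed as follows. First, note that under the hypothesis $\min_{i\in I_t} p_i^t>0$, the quantity $\theta(\kappa^t,p^t)$ is well-defined and nonnegative (the sum in the denominator is strictly positive whenever $I_t\neq \emptyset$; if $I_t=\emptyset$ then $\kappa^t=0$, $P(w^t)=D(\alpha^t)$, and there is nothing to prove). Second, invoke Lemma~\ref{l:Theoutofquadratic} with the choice $\theta=\theta(\kappa^t,p^t)\in[0,+\infty)$; since $\{\phi_i\}_i$ are quadratic, this gives
\begin{align*}
&\Exp_t[D(\alpha^{t+1})-D(\alpha^t)] - \theta(\kappa^t,p^t)\bigl(P(w^t)-D(\alpha^t)\bigr) \\
&\quad \geq -\frac{\theta(\kappa^t,p^t)}{2\lambda n^2}\sum_{i\in I_t}\!\left(\frac{\theta(\kappa^t,p^t)(v_i+n\lambda\gamma)}{p_i^t}-n\lambda\gamma\right)|\kappa_i^t|^2.
\end{align*}
Third, plug the explicit formula for $\theta(\kappa^t,p^t)$ into the right-hand side: by construction of $\theta(\kappa,p)$, the sum
\[\sum_{i\in I_t}\!\left(\frac{\theta(\kappa^t,p^t)(v_i+n\lambda\gamma)}{p_i^t}-n\lambda\gamma\right)|\kappa_i^t|^2\]
equals zero, so the right-hand side vanishes. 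This yields the desired inequality.

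There is no genuine obstacle here beyond citing Lemma~\ref{l:Theoutofquadratic}; the point of the proposition is simply to repackage that lemma into the same form as Proposition~\ref{prop-d1} so that the proof of Theorem~\ref{th:general} can be reused in the quadratic case without the inequality constraint~\eqref{a:thetaleqmin}. The only minor care needed is the edge case $I_t=\emptyset$, which I would handle separately as noted above, since then $p^t$ being coherent with $\kappa^t=0$ imposes no condition and both sides of the target inequality are zero.
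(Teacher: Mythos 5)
Your proof is correct and follows essentially the same route as the paper: the paper also derives Proposition~\ref{prop-d2} directly from Lemma~\ref{l:Theoutofquadratic} together with the observation that the right-hand side of~\eqref{a:Theoutof} vanishes at $\theta=\theta(\kappa^t,p^t)$. Your explicit treatment of the edge case $I_t=\emptyset$ is a small extra care the paper leaves implicit, but it does not change the argument.
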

\begin{proof}
This is a direct consequence of Lemma~\ref{l:Theoutofquadratic} and the fact that the right-hand side of~\eqref{a:Theoutof} equals 0 when $\theta=\theta(\kappa^t, p^t)$.
\end{proof}
\begin{theorem}\label{th:quadratic}Suppose that all $\{\phi_i\}_i$ are quadratic.
 Consider AdaSDCA. If at each iteration $t\geq 0$, $\min_{i\in I_t} p_i^t>0$, then~\eqref{a:esdfdf} holds for all $t\geq 0$.
 \end{theorem}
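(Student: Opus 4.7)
The approach is to mirror the proof of Theorem~\ref{th:general} line by line, but use Proposition~\ref{prop-d2} in place of Proposition~\ref{prop-d1}. Recall that the only reason Theorem~\ref{th:general} required the additional constraint $\theta(\kappa^t, p^t)\leq \min_{i\in I_t}p_i^t$ was to be able to apply Lemma~\ref{l:Theoutof} with the choice $\theta=\theta(\kappa^t, p^t)$, so that the right-hand side of~\eqref{a:Theoutof} vanishes. Under the quadratic assumption, Lemma~\ref{l:Theoutofquadratic} tells us that~\eqref{a:Theoutof} holds for \emph{every} $\theta\in[0,\infty)$ as soon as $\min_{i\in I_t}p_i^t>0$, and consequently Proposition~\ref{prop-d2} provides the per-iteration contraction inequality without any upper bound on $\theta(\kappa^t, p^t)$.

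The key steps, in order, are as follows. First, fix $t\geq 0$ and apply Proposition~\ref{prop-d2} conditionally on the history; this yields
\[
\Exp_t[D(\alpha^{t+1})-D(\alpha^t)] \geq \theta(\kappa^t, p^t)\bigl(P(w^t)-D(\alpha^t)\bigr),
\]
which is valid under the sole assumption $\min_{i\in I_t}p_i^t>0$. Second, take total expectation and divide/multiply by $\Exp[P(w^t)-D(\alpha^t)]$ to introduce the quantity $\tilde \theta_t$ defined in~\eqref{a:tildetheta}, giving
\[
\Exp[D(\alpha^{t+1})-D(\alpha^t)] \geq \tilde \theta_t\, \Exp[P(w^t)-D(\alpha^t)].
\]
Third, apply weak duality $P(w^t)\geq D(\alpha^*)$ to lower bound $P(w^t)-D(\alpha^t)$ by $D(\alpha^*)-D(\alpha^t)$, which yields the one-step recursion
\[
\Exp[D(\alpha^*)-D(\alpha^{t+1})] \leq (1-\tilde \theta_t)\,\Exp[D(\alpha^*)-D(\alpha^t)].
\]

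Fourth, iterate this recursion from $k=0$ to $t$ to obtain
\[
\Exp[D(\alpha^*)-D(\alpha^t)] \leq \prod_{k=0}^{t-1}(1-\tilde \theta_k)\bigl(D(\alpha^*)-D(\alpha^0)\bigr),
\]
and finally plug this back into the intermediate inequality $\Exp[P(w^t)-D(\alpha^t)]\leq \tilde\theta_t^{-1}\Exp[D(\alpha^{t+1})-D(\alpha^t)]\leq \tilde\theta_t^{-1}\Exp[D(\alpha^*)-D(\alpha^t)]$ to recover exactly~\eqref{a:esdfdf}.

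There is essentially no new obstacle beyond what was handled in Theorem~\ref{th:general}; the only delicate point to double-check is the legitimacy of dividing by $\Exp[P(w^t)-D(\alpha^t)]$ in the definition of $\tilde\theta_t$, which is harmless since if this quantity ever vanishes then the primal-dual gap is already zero and the bound is trivial. The entire proof thus reduces to observing that Proposition~\ref{prop-d2} replaces Proposition~\ref{prop-d1} unconditionally in the quadratic case, making the conclusion of Theorem~\ref{th:general} hold without the extra constraint on $\theta(\kappa^t, p^t)$.
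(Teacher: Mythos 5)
Your proposal is correct and is essentially identical to the paper's own argument: the paper's proof of Theorem~\ref{th:quadratic} simply says to invoke Proposition~\ref{prop-d2} in place of Proposition~\ref{prop-d1} and then repeat the proof of Theorem~\ref{th:general} verbatim, which is exactly what you spell out. Your more careful index $\prod_{k=0}^{t-1}$ in the intermediate recursion and the remark about dividing by $\Exp[P(w^t)-D(\alpha^t)]$ are harmless refinements of the same route.
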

 \begin{proof}
 We only need to apply Proposition~\ref{prop-d2}. The rest of the proof is the same as in Theorem~\ref{th:general}.
 \end{proof}

\begin{corollary}\label{coro:quadratic}
Suppose that all $\{\phi_i\}_i$ are quadratic.
Consider AdaSDCA.
If at each iteration $t\geq 0$, $p_t$ is the optimal solution of~\eqref{eq:optprelaxed}, which has a closed form~\eqref{a:optipsdf}, then~\eqref{a:esdfdf} holds and $\tilde \theta_t \geq \theta_*$ for all $t\geq 0$.
\end{corollary}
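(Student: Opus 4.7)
The plan is to verify the hypothesis of Theorem~\ref{th:quadratic} so that~\eqref{a:esdfdf} follows immediately, and then to establish the pointwise bound $\theta(\kappa^t, p^t) \geq \theta_*$ by comparing the adaptive probability $p^t$ against the fixed importance sampling $p^*$ from~\eqref{a:pstar}.

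First I would check that $\min_{i\in I_t} p_i^t > 0$. With $p^t = p^*(\kappa^t)$ given by the closed form~\eqref{a:optipsdf}, whenever $i\in I_t$ the numerator $|\kappa_i^t|\sqrt{v_i+n\lambda\gamma}$ is strictly positive, and the denominator is a finite positive sum (since $I_t$ is nonempty, at least one term is positive). Therefore Theorem~\ref{th:quadratic} applies and~\eqref{a:esdfdf} holds.

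The main step is the bound $\tilde\theta_t \geq \theta_*$. I would compute $\theta(\kappa, p^*)$ for the fixed importance sampling $p_i^* = (v_i+n\lambda\gamma)/\sum_j(v_j+n\lambda\gamma)$. The identity $(p_i^*)^{-1}(v_i+n\lambda\gamma) = \sum_j(v_j+n\lambda\gamma)$ is independent of $i$, so plugging into the definition of $\theta(\cdot,\cdot)$ the sum $\sum_{i\in I_t}|\kappa_i|^2$ cancels between numerator and denominator, leaving
\[
\theta(\kappa, p^*) \;=\; \frac{n\lambda\gamma}{\sum_j(v_j + n\lambda\gamma)} \;=\; \theta_*
\]
regardless of $\kappa$. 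Since $p^t$ is, by construction, the maximizer of $\theta(\kappa^t,\cdot)$ over the probability simplex (the feasible set of~\eqref{eq:optprelaxed}) and $p^*$ lies in this simplex, we conclude $\theta(\kappa^t, p^t) \geq \theta(\kappa^t, p^*) = \theta_*$ on every sample path.

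Finally, weak duality gives $P(w^t)-D(\alpha^t) \geq 0$, so the pointwise inequality lifts to the ratio~\eqref{a:tildetheta}: the numerator is at least $\theta_*\,\E[P(w^t)-D(\alpha^t)]$, which equals $\theta_*$ times the denominator. (If the denominator vanishes, the algorithm has already converged and the statement is vacuous.) Hence $\tilde\theta_t \geq \theta_*$. I do not anticipate any real obstacle: the only nontrivial observation is the $\kappa$-independent cancellation that makes $\theta(\kappa, p^*)=\theta_*$; everything else reduces to Theorem~\ref{th:quadratic} and the optimality of $p^t$ in~\eqref{eq:optprelaxed}.
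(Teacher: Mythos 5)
Your proposal is correct and is exactly the intended argument (the paper itself omits the proof of this corollary as immediate): Theorem~\ref{th:quadratic} gives~\eqref{a:esdfdf} once $\min_{i\in I_t}p_i^t>0$ is checked, and the key observation that $(p_i^*)^{-1}(v_i+n\lambda\gamma)$ is constant in $i$, so that $\theta(\kappa^t,p^*)=\theta_*$ for every $\kappa^t$, combined with the optimality of $p^t$ for~\eqref{eq:optprelaxed} and weak duality, yields $\tilde\theta_t\geq\theta_*$. No gaps.
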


\section{Efficient heuristic variant} \label{s:heuristic}

 Corollary~\ref{coro:general} and~\ref{coro:quadratic} suggest how to choose adaptive sampling probability in AdaSDCA which yields a theoretical convergence rate at least as good as IProx-SDCA~\cite{IProx-SDCA}. However,  there are two main implementation issues of AdaSDCA:
 \begin{itemize}
 \item[1.] The update of the dual residue $\kappa^t$ at each iteration costs $O(\mynnz(A))$ where $\mynnz(A)$ is the number of nonzero elements of the matrix $A$; 
 \item[2.] We do not know how to compute the optimal solution of~\eqref{eq:optp}.
 \end{itemize}

  In this section, we propose a heuristic variant of AdaSDCA, which avoids the above two issues while staying close to the 'good' adaptive sampling distribution.
 
 \subsection{Description of Algorithm}

\begin{algorithm}
\caption{AdaSDCA+} \label{alg:HA}
\begin{algorithmic}
\STATE \textbf{Parameter} a number $m>1$
\STATE \textbf{Initialization} Choose $\alpha^0 \in \R^n$, set $\bar \alpha^ 0=\frac{1}{\lambda n} A \alpha^ 0$
\FOR{$t \geq 0$}
\STATE Primal update: $w^t =  \nabla g^* \left(\bar\alpha^{t}\right)$
\STATE Set: $\alpha^{t+1}=\alpha^{t}$
\IF{$\mod(t, n) == 0$}
\STATE \textbf{Option I:} Adaptive probability
\STATE \quad Compute: $\kappa_i^t=\alpha_i^t+\nabla \phi_i(A_i^ \top w^ t) , ~ \forall i \in [n]$
\STATE \quad Set: $p_i^t \sim |\kappa_i^t|\sqrt{v_{i} + n\lambda\gamma},~ \forall i \in [n]$ 
\STATE \textbf{Option II:} Optimal Importance probability
\STATE \quad Set: $p_i^t \sim (v_i + n\lambda\gamma),\enspace \forall i\in [n]$
\ENDIF
\STATE Generate random $i_t \in [n]$ according to $p^t$
\STATE Compute: \\
 $\qquad \Delta\alpha_{i_t}^t = \displaystyle\argmax_{\Delta \in \R} \left\{-\phi_{i_t}^*(-(\alpha_{i_t}^{t}+\Delta))\right.$\\
$\qquad \qquad\qquad\qquad\quad\left.-A_{i_t}^\top w^t\Delta-
\frac{v_{i_t}}{2\lambda n}|\Delta|^2\right\}$
\STATE Dual update: $\alpha_{i_t}^{t+1} = \alpha_{i_t}^{t} + \Delta\alpha_{i_t}^t$
\STATE Average update: $\bar \alpha^t=\bar \alpha^t+\frac{\Delta\alpha_{i_t}}{\lambda n}A_{i_t}$
\STATE Probability update:
\\ $\qquad \qquad p_{i_t}^{t+1}\sim p_{i_t}^t/m$, $\enspace p_{j}^{t+1}\sim p_{j}^t$, $\forall j \neq i_t$ \\
\ENDFOR
\STATE \textbf{Output:} $w^t, \alpha^t$
\end{algorithmic}
\end{algorithm}

AdaSDCA+ has the same structure as AdaSDCA with a few important differences.

\textbf{Epochs}  AdaSDCA+ is divided into epochs of length $n$. At the beginning of every epoch, sampling probabilities are computed according to one of two options. During each epoch the probabilities are cheaply updated at the end of every iteration to approximate the adaptive model. The intuition behind is as follows. After  $i$ is sampled and the dual coordinate $\alpha_i$ is updated, the residue $\kappa_i$ naturally decreases. We then decrease also the probability that $i$ is chosen in the next iteration, by setting $p^{t+1}$ to be proportional to 
$(p_1^t,\dots p_{i-1}^t, p_i^t/m, p_{i+1}^t,\dots, p_n^t)$. By doing this we avoid the computation of $\kappa$ at each iteration (issue 1) which costs as much as the full gradient algorithm, while following closely the changes of the dual residue $\kappa$. We  reset the adaptive sampling probability after every epoch of length $n$. 


\textbf{Parameter $m$} The setting of parameter $m$ in AdaSDCA+ directly affects the performance of the algorithm. If  $m$ is too large, the probability of sampling the same coordinate twice during an epoch will be very small. This will result in a random permutation through all coordinates every epoch. On the other hand, for $m$ too small the coordinates having larger probabilities at the beginning of an epoch could be sampled more often than it should, even after their corresponding dual residues become sufficiently small. We don't have a definitive rule on the choice of $m$ and we leave this to future work. Experiments with different choices of $m$ can be found in Section \ref{s:Experiments}.

\textbf{Option I \& Option II} At the beginning of each epoch, one can choose between two options for resetting the sampling probability. Option I corresponds to the optimal solution of~\eqref{eq:optprelaxed}, given by the closed form~\eqref{a:optipsdf}. Option II is the optimal serial sampling probability~\eqref{a:pstar}, the same as the one used in IProx-SDCA~\cite{IProx-SDCA}. However, AdaSDCA+ differs significantly with IProx-SDCA since we also update iteratively the sampling probability, which as we show through numerical experiments yields a faster convergence than IProx-SDCA. 


\subsection{Computational cost}

\textbf{Sampling and probability update} During the algorithm we sample $i\in [n]$ from non-uniform probability distribution $p^t$, which changes at each iteration. This process can be done efficiently using the {Random Counters} algorithm introduced in Section 6.2 of \cite{Nesterov:2010RCDM}, which takes $O(n\log(n))$ operations to create the probability tree and $O(\log(n))$ operations to sample from the distribution or change one of the probabilities.  

\textbf{Total computational cost} We can compute the computational cost of one epoch. At the beginning of an epoch, we need $O(\mynnz)$ operations to calculate the dual residue $\kappa$. Then we create a probability tree using $O(n\log(n))$ operations. At each iteration we need $O(\log(n))$ operations to sample a coordinate, $O(\mynnz/n)$ operations to calculate the update to $\alpha$ and a further $O( \log(n))$ operations to update the probability tree. As a result an epoch needs $O(\mynnz + n\log(n))$ operations. 
For comparison purpose we list in Table~\ref{t:comp_costs} the one epoch computational cost of comparable algorithms.

\begin{table}[t]
\caption{One epoch computational cost of different algorithms}
\label{t:comp_costs}
\vskip 0.15in
\begin{center}
\begin{small}
\begin{sc}
\begin{tabular}{|l|c|}
\hline
\abovespace\belowspace
Algorithm & cost of an epoch  \\
\hline
\abovespace
SDCA\& QUARTZ(uniform)  & $O(\mynnz)$ \\
\hline
\abovespace
IProx-SDCA & $O(\mynnz + n \log(n))$ \\
\hline
\abovespace
AdaSDCA & $O(n\cdot \mynnz) $ \\
\hline
\abovespace
AdaSDCA+ & $O(\mynnz + n\log(n))$ \\
\hline
\end{tabular}
\end{sc}
\end{small}
\end{center}
\vskip -0.1in
\end{table}

\section{Numerical Experiments} \label{s:Experiments}

In this section we present results of numerical experiments.

\subsection{Loss functions}

We test AdaSDCA and AdaSDCA+, SDCA, and IProx-SDCA for two different types of loss functions $\{\phi_i\}_{i=1}^n$: quadratic loss and smoothed Hinge loss. Let $y\in \R^n$ be the vector of labels. The  quadratic loss is given by
$$\phi_i(x) = \frac{1}{2\gamma}(x - y_i)^2$$
 and the smoothed Hinge loss is:
 \begin{equation*}
\phi_i(x) = \begin{cases}
0 &y_i x \geq 1
\\ 1- y_i x - \gamma/2 & y_i x \leq 1-\gamma
\\ \frac{(1 - y_i x)^2}{2\gamma} &\text{otherwise,}
\end{cases}
\end{equation*}
In both cases we use $L_2$-regularizer, i.e.,
$$
g(w)=\frac{1}{2}\|w\|^2.
$$

 Quadratic loss  functions appear usually in regression problems, and smoothed Hinge loss can be found in linear support vector machine (SVM) problems~\cite{ASDCA}.

\subsection{Numerical results}

 We used 5 different datasets: w8a, dorothea, mushrooms, cov1 and ijcnn1 (see Table \ref{t:datasets}). 
 
\begin{table}[t]
\caption{Dimensions and nonzeros of the datasets}
\label{t:datasets}
\vskip 0.15in
\begin{center}
\begin{small}
\begin{sc}
\begin{tabular}{|l|c|c|c|}
\hline
\abovespace\belowspace
Dataset & $d$ & $n$ & $\mynnz/(nd)$  \\
\hline
\abovespace
w8a  & $300$ & $49,749$ & $3.9\%$ \\
\hline
\abovespace
dorothea & $100,000$ & 800 & $0.9\%$  \\
\hline
\abovespace
mushrooms & $112$ & $8,124$ & $18.8\%$ \\
\hline
\abovespace
cov1 & $54$ & $581,012$ & $22\%$ \\
\hline
\abovespace
ijcnn1 & $22$ & $49,990$ & $41\%$ \\
\hline
\end{tabular}
\end{sc}
\end{small}
\end{center}
\vskip -0.1in
\end{table}
 
 In all our experiments we used $\gamma=1$ and $\lambda=1/n$. 


\textbf{AdaSDCA} The results of the theory developed in Section~\ref{s:conv} can be observed through Figure~\ref{w8a_fullad} to Figure~\ref{ijcnn1_fullad}. AdaSDCA needs the least amount of iterations to converge, confirming the theoretical result.

\textbf{AdaSDCA+ V.S.  others} We can observe through Figure~\ref{w8a_quad_it} to~\ref{ijcnn1_hinge_it}, that both options of AdaSDCA+ outperforms SDCA and IProx-SDCA, in terms of number of iterations, for quadratic loss functions and for smoothed Hinge loss functions. One can observe similar results in terms of time through Figure~\ref{time_w8a_quad} to Figure~\ref{time_ijcnn1_hinge}.

\textbf{Option I V.S. Option II}
Despite the fact that Option I is not theoretically supported for smoothed hinge loss, it still converges faster than Option II on every dataset and for every loss function. The biggest difference can be observed on Figure \ref{time_cov1_hinge}, where Option I converges to the machine precision in just 15 seconds.

\textbf{Different choices of $m$} 
 To show the impact of different choices of $m$ on the performance of AdaSDCA+, in Figures~\ref{w8a_quad_itm} to~\ref{ijcnn1_hinge_itm} we compare the results of the two options of AdaSDCA+ using different $m$ equal to $2$, $10$ and $50$.
It is hard to draw a clear conclusion here because clearly the optimal $m$ shall depend on the dataset and the problem type. 


\begin{figure}[ht]
\vskip 0.2in
\begin{center}
\centerline{\includegraphics[width=0.99\columnwidth]{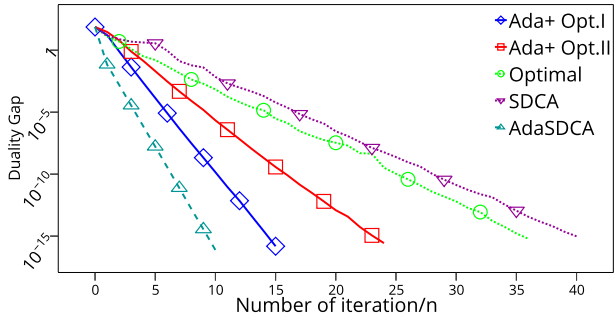}}
\caption{w8a dataset $d$ = 300, $n$ = 49749, Quadratic loss with $L_2$ regularizer, comparing number of iterations with known algorithms}
\label{w8a_fullad}
\end{center}
\vskip -0.2in
\end{figure}

\begin{figure}[t]
\vskip 0.2in
\begin{center}
\centerline{\includegraphics[width=0.99\columnwidth]{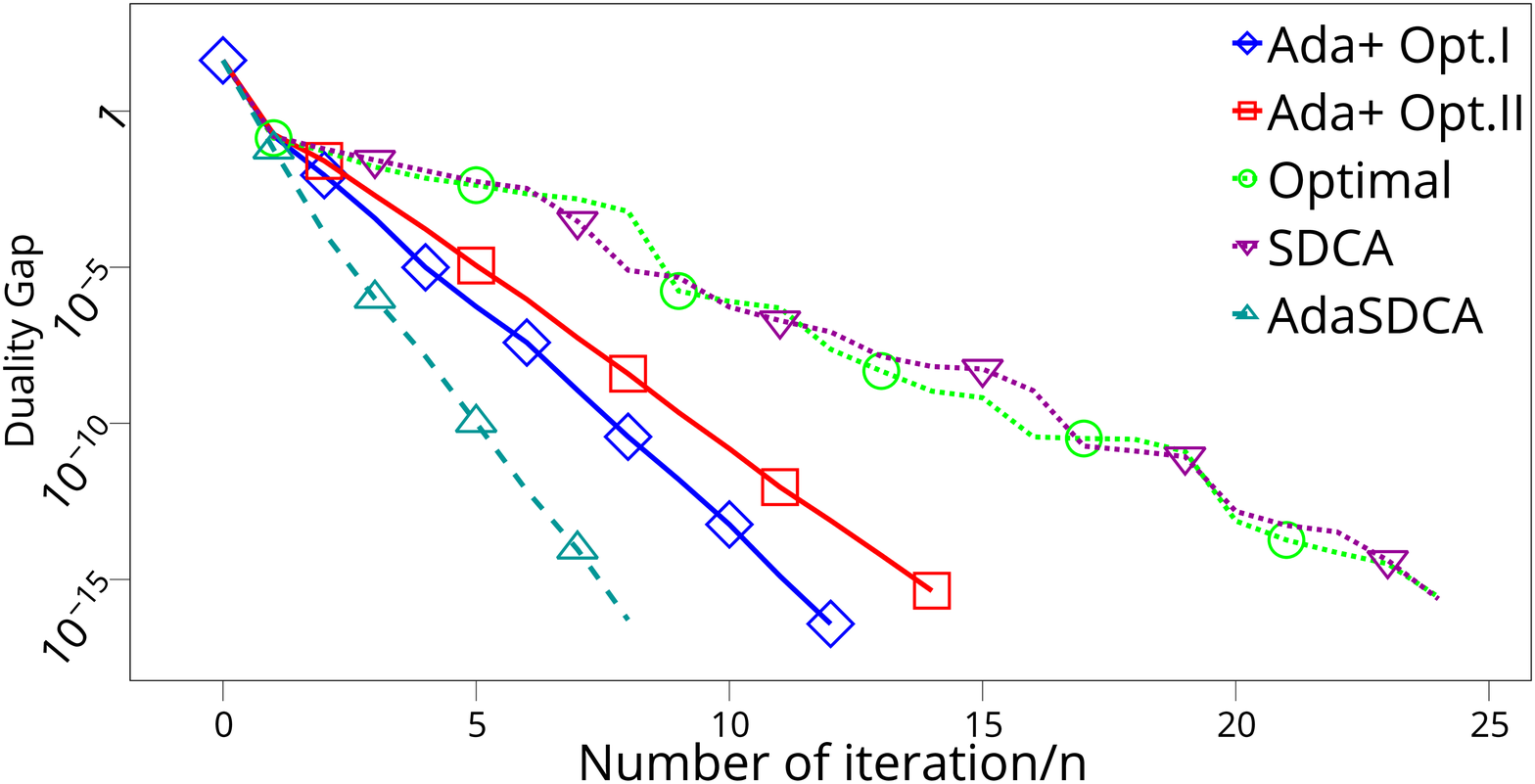}}
\caption{dorothea dataset $d$ = 100000, $n$ = 800, Quadratic loss with $L_2$ regularizer, comparing number of iterations with known algorithms}
\label{dorothea_fullad}
\end{center}
\vskip -0.2in
\end{figure}

\begin{figure}[t]
\vskip 0.2in
\begin{center}
\centerline{\includegraphics[width=0.99\columnwidth]{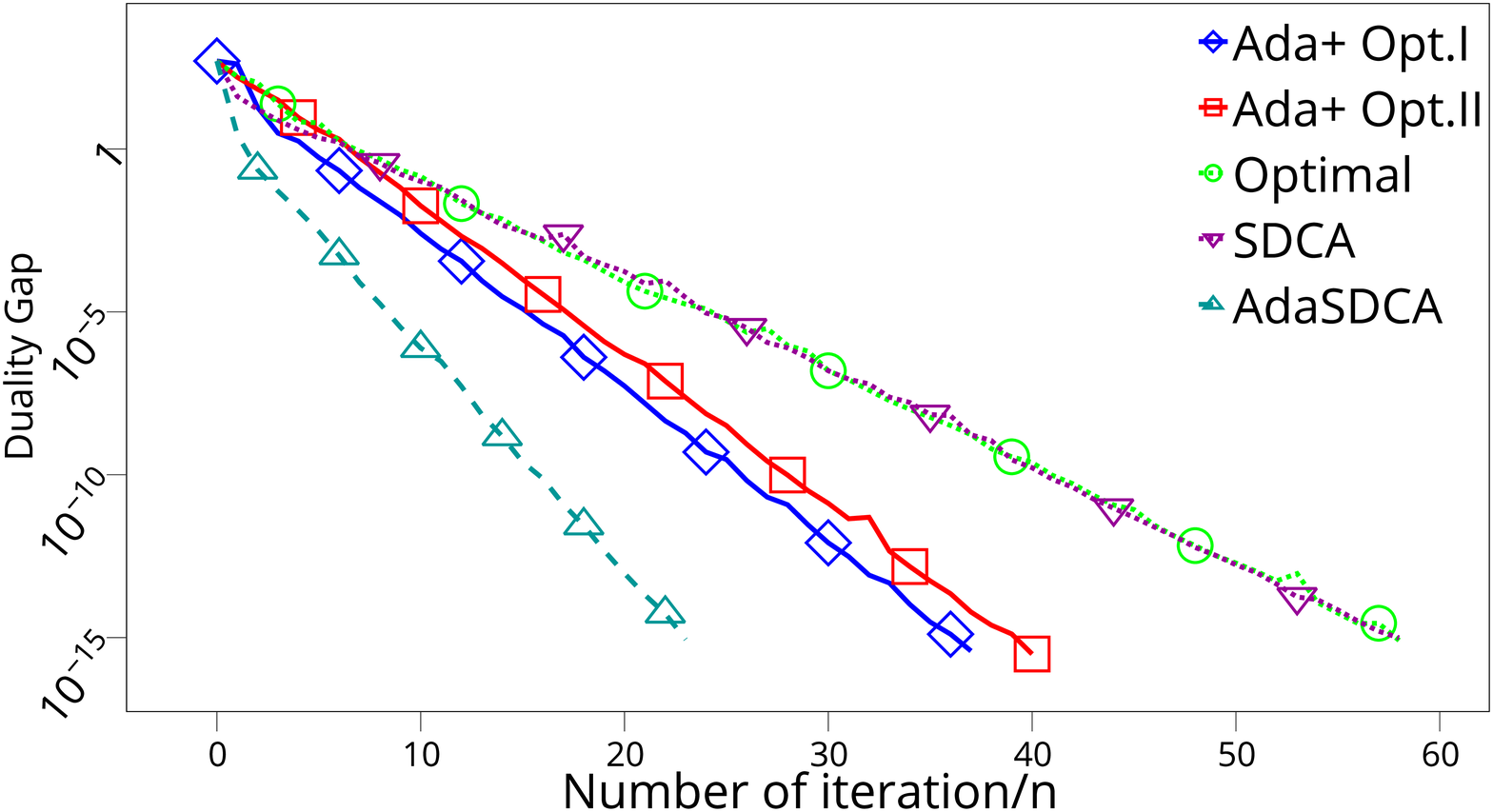}}
\caption{mushrooms dataset $d$ = 112, $n$ = 8124, Quadratic loss with $L_2$ regularizer, comparing number of iterations with known algorithms}
\label{mushrooms_fullad}
\end{center}
\vskip -0.2in
\end{figure}

\begin{figure}[t]
\vskip 0.2in
\begin{center}
\centerline{\includegraphics[width=0.99\columnwidth]{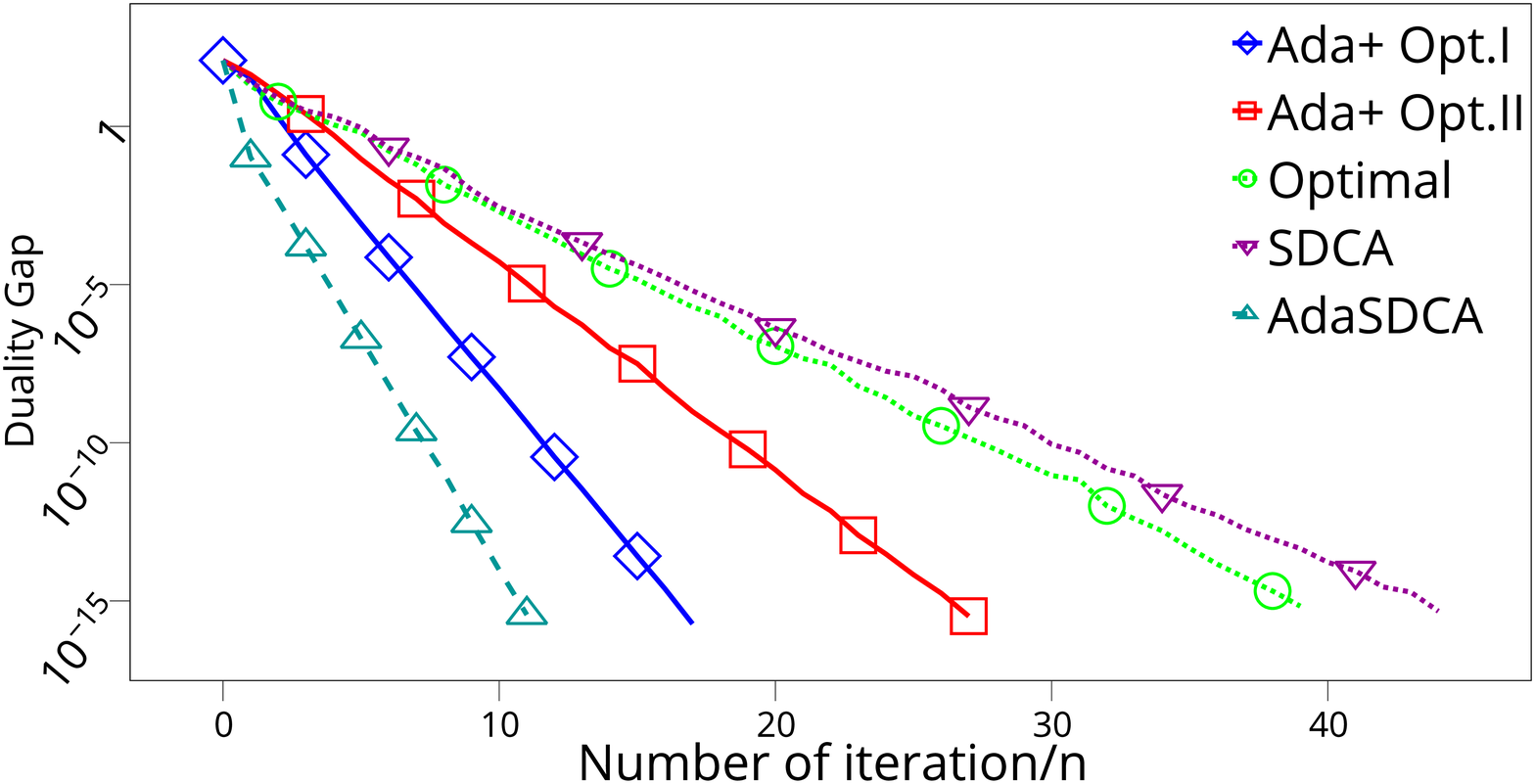}}
\caption{ijcnn1 dataset $d$ = 22, $n$ = 49990, Quadratic loss with $L_2$ regularizer, comparing number of iterations with known algorithms}
\label{ijcnn1_fullad}
\end{center}
\vskip -0.2in
\end{figure}

\clearpage

\begin{figure}[t]
\vskip 0.2in
\begin{center}
\centerline{\includegraphics[width=0.99\columnwidth]{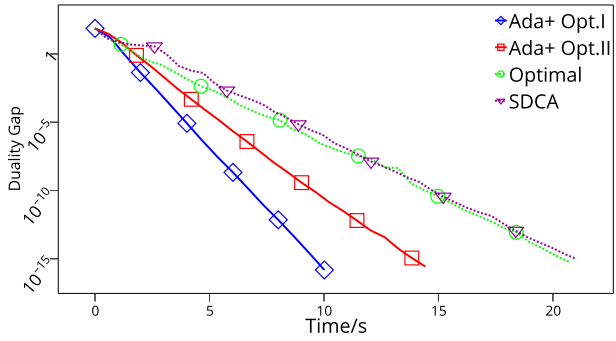}}
\caption{w8a dataset $d$ = 300, $n$ = 49749, Quadratic loss with $L_2$ regularizer, comparing real time with known algorithms}
\label{time_w8a_quad}
\end{center}
\vskip -0.2in
\end{figure}

\begin{figure}[t]
\vskip 0.2in
\begin{center}
\centerline{\includegraphics[width=0.99\columnwidth]{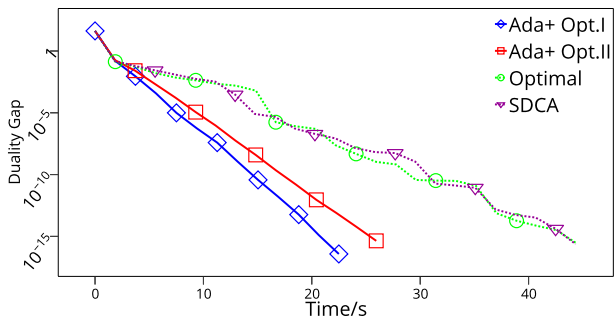}}
\caption{dorothea dataset $d$ = 100000, $n$ = 800, Quadratic loss with $L_2$ regularizer, comparing real time with known algorithms}
\label{time_dorothea_quad}
\end{center}
\vskip -0.2in
\end{figure}

\begin{figure}[t]
\vskip 0.2in
\begin{center}
\centerline{\includegraphics[width=0.99\columnwidth]{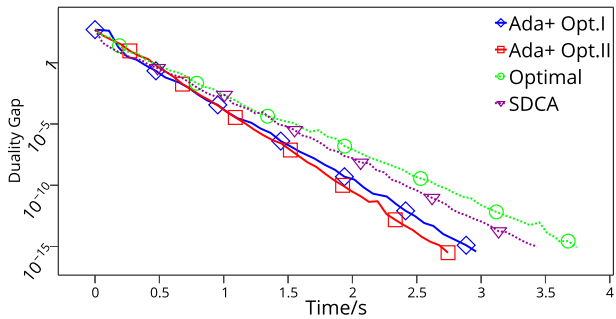}}
\caption{mushrooms dataset $d$ = 112, $n$ = 8124, Quadratic loss with $L_2$ regularizer, comparing real time with known algorithms}
\label{time_mushrooms_quad}
\end{center}
\vskip -0.2in
\end{figure}

\begin{figure}[t]
\vskip 0.2in
\begin{center}
\centerline{\includegraphics[width=0.99\columnwidth]{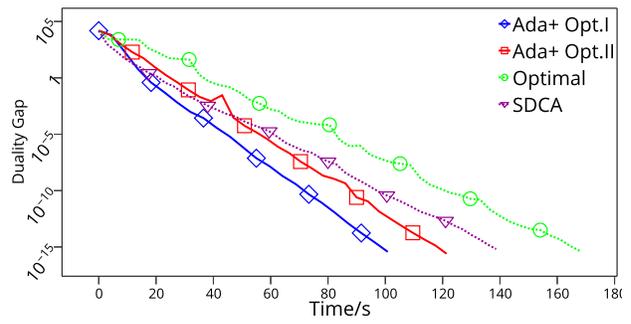}}
\caption{cov1 dataset $d$ = 54, $n$ = 581012, Quadratic loss with $L_2$ regularizer, comparing real time with known algorithms}
\label{time_cov1_quad}
\end{center}
\vskip -0.2in
\end{figure}

\begin{figure}[t]
\vskip 0.2in
\begin{center}
\centerline{\includegraphics[width=0.99\columnwidth]{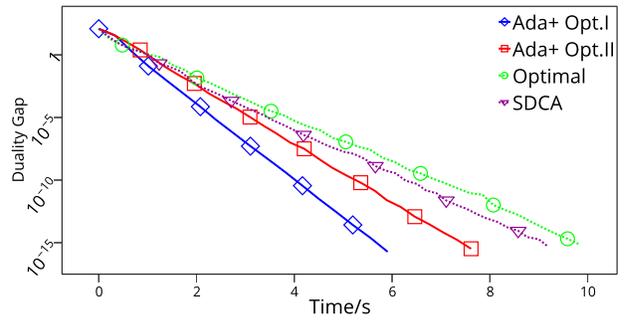}}
\caption{ijcnn1 dataset $d$ = 22, $n$ = 49990, Quadratic loss with $L_2$ regularizer, comparing real time with known algorithms}
\label{time_ijcnn1_quad}
\end{center}
\vskip -0.2in
\end{figure}

\begin{figure}[t]
\vskip 0.2in
\begin{center}
\centerline{\includegraphics[width=0.99\columnwidth]{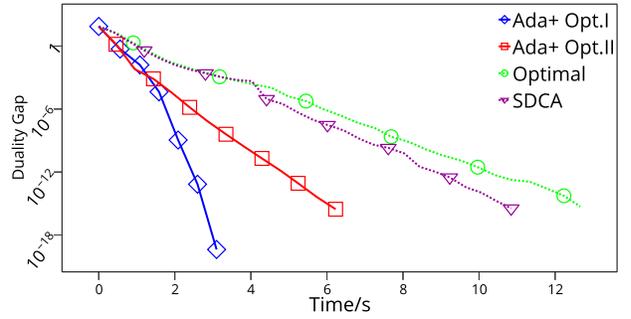}}
\caption{w8a dataset $d$ = 300, $n$ = 49749, Smooth Hinge loss with $L_2$ regularizer, comparing real time with known algorithms}
\label{time_w8a_hinge}
\end{center}
\vskip -0.2in
\end{figure}

\clearpage

\begin{figure}[t]
\vskip 0.2in
\begin{center}
\centerline{\includegraphics[width=0.99\columnwidth]{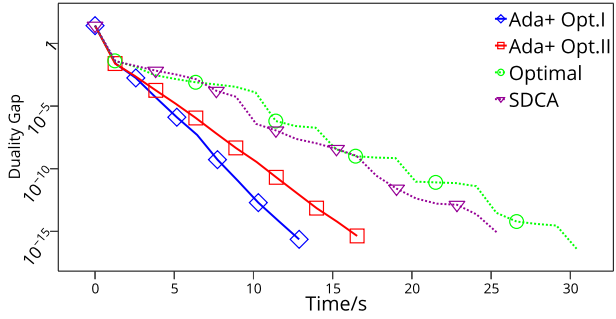}}
\caption{dorothea dataset $d$ = 100000, $n$ = 800, Smooth Hinge loss with $L_2$ regularizer, comparing real time with known algorithms}
\label{time_dorothea_hinge}
\end{center}
\vskip -0.2in
\end{figure}

\begin{figure}[t]
\vskip 0.2in
\begin{center}
\centerline{\includegraphics[width=0.99\columnwidth]{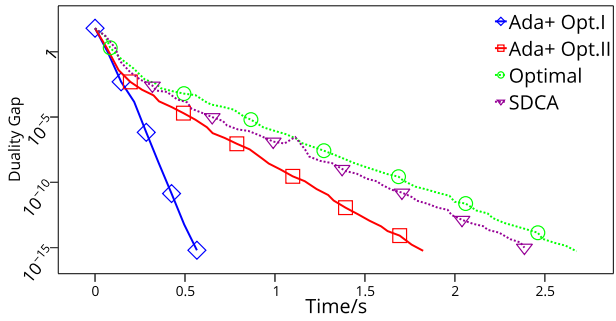}}
\caption{mushrooms dataset $d$ = 112, $n$ = 8124, Smooth Hinge loss with $L_2$ regularizer, comparing real time with known algorithms}
\label{time_mushrooms_hinge}
\end{center}
\vskip -0.2in
\end{figure}

\begin{figure}[t]
\vskip 0.2in
\begin{center}
\centerline{\includegraphics[width=0.99\columnwidth]{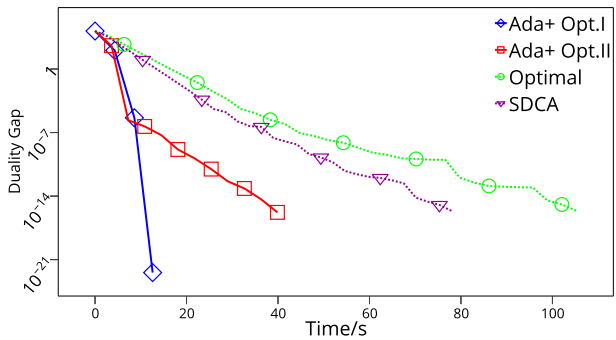}}
\caption{cov1 dataset $d$ = 54, $n$ = 581012, Smooth Hinge loss with $L_2$ regularizer, comparing real time with known algorithms}
\label{time_cov1_hinge}
\end{center}
\vskip -0.2in
\end{figure}

\begin{figure}[t]
\vskip 0.2in
\begin{center}
\centerline{\includegraphics[width=0.99\columnwidth]{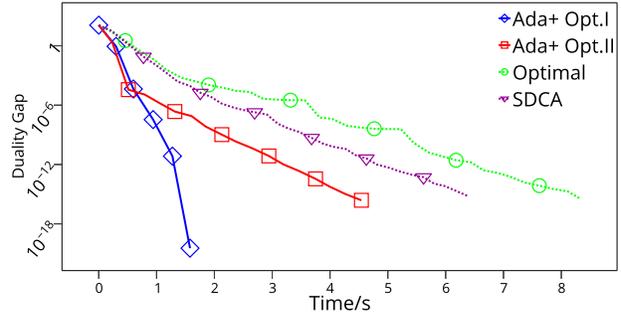}}
\caption{ijcnn1 dataset $d$ = 22, $n$ = 49990, Smooth Hinge loss with $L_2$ regularizer, comparing real time with known algorithms}
\label{time_ijcnn1_hinge}
\end{center}
\vskip -0.2in
\end{figure}  

\begin{figure}[t]
\vskip 0.2in
\begin{center}
\centerline{\includegraphics[width=0.99\columnwidth]{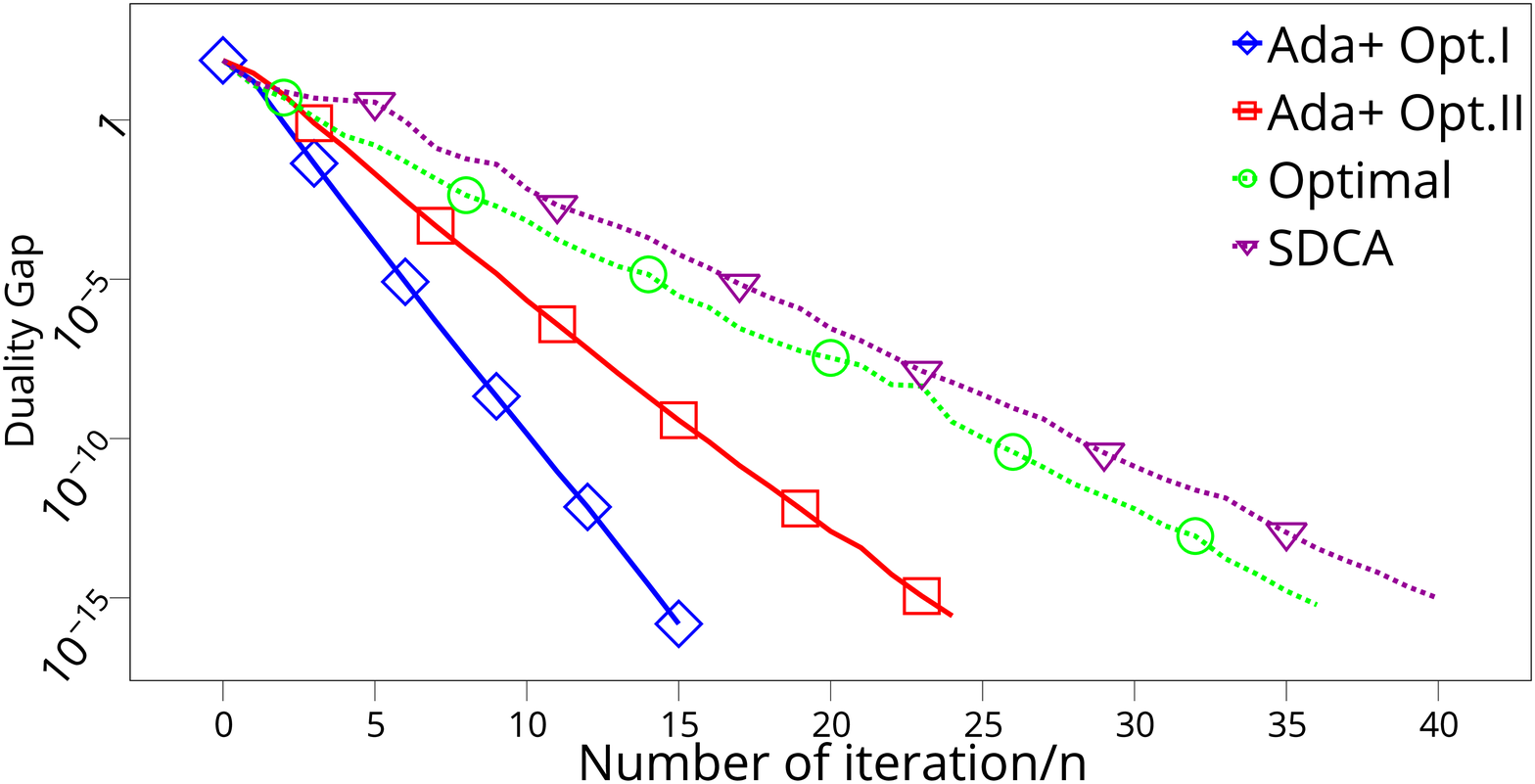}}
\caption{w8a dataset $d$ = 300, $n$ = 49749, Quadratic loss with $L_2$ regularizer, comparing number of iterations with known algorithms}
\label{w8a_quad_it}
\end{center}
\vskip -0.2in
\end{figure}

\begin{figure}[t]
\vskip 0.2in
\begin{center}
\centerline{\includegraphics[width=0.99\columnwidth]{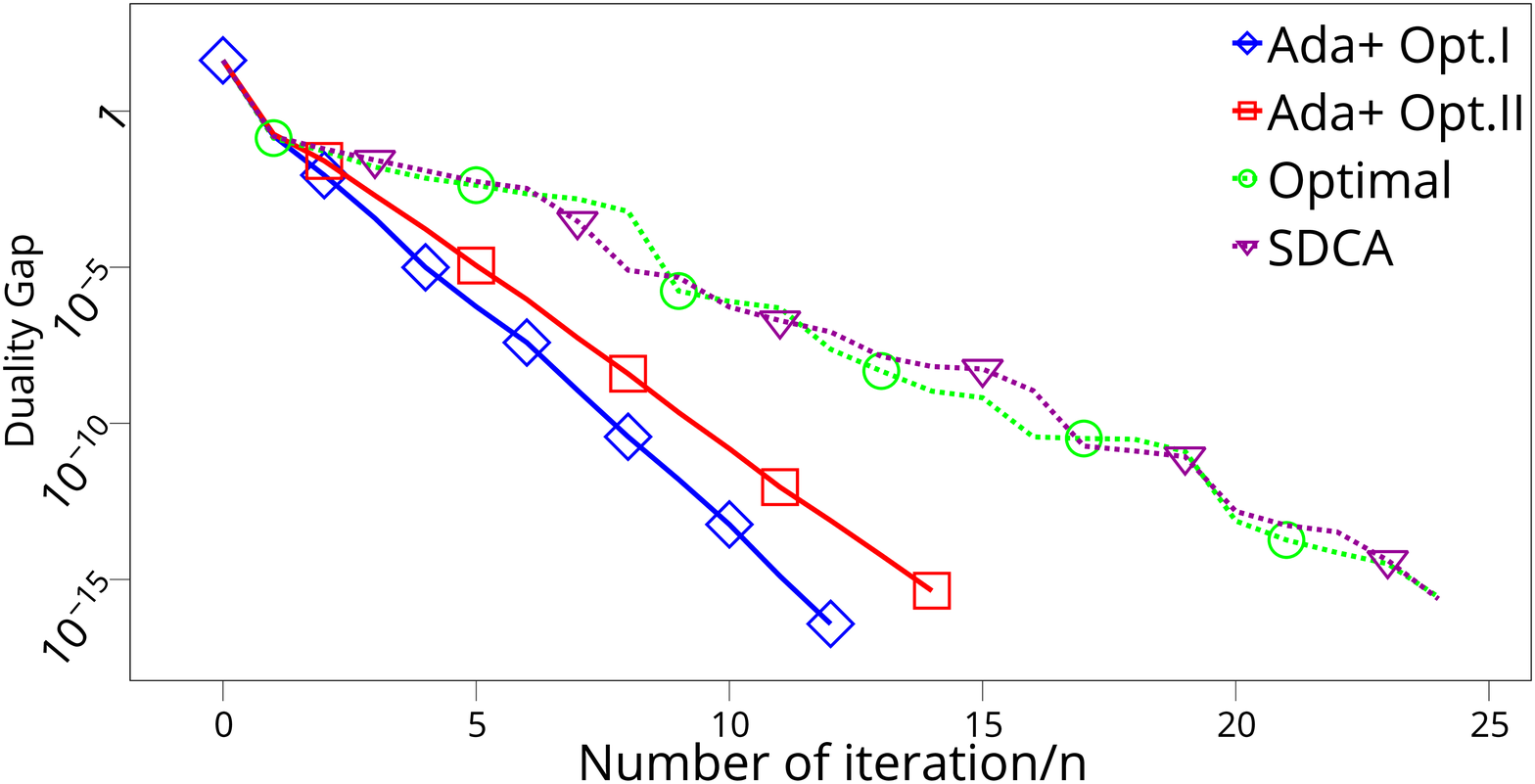}}
\caption{dorothea dataset $d$ = 100000, $n$ = 800, Quadratic loss with $L_2$ regularizer, comparing number of iterations with known algorithms}
\label{dorothea_quad_it}
\end{center}
\vskip -0.2in
\end{figure}

\clearpage

\begin{figure}[t]
\vskip 0.2in
\begin{center}
\centerline{\includegraphics[width=0.99\columnwidth]{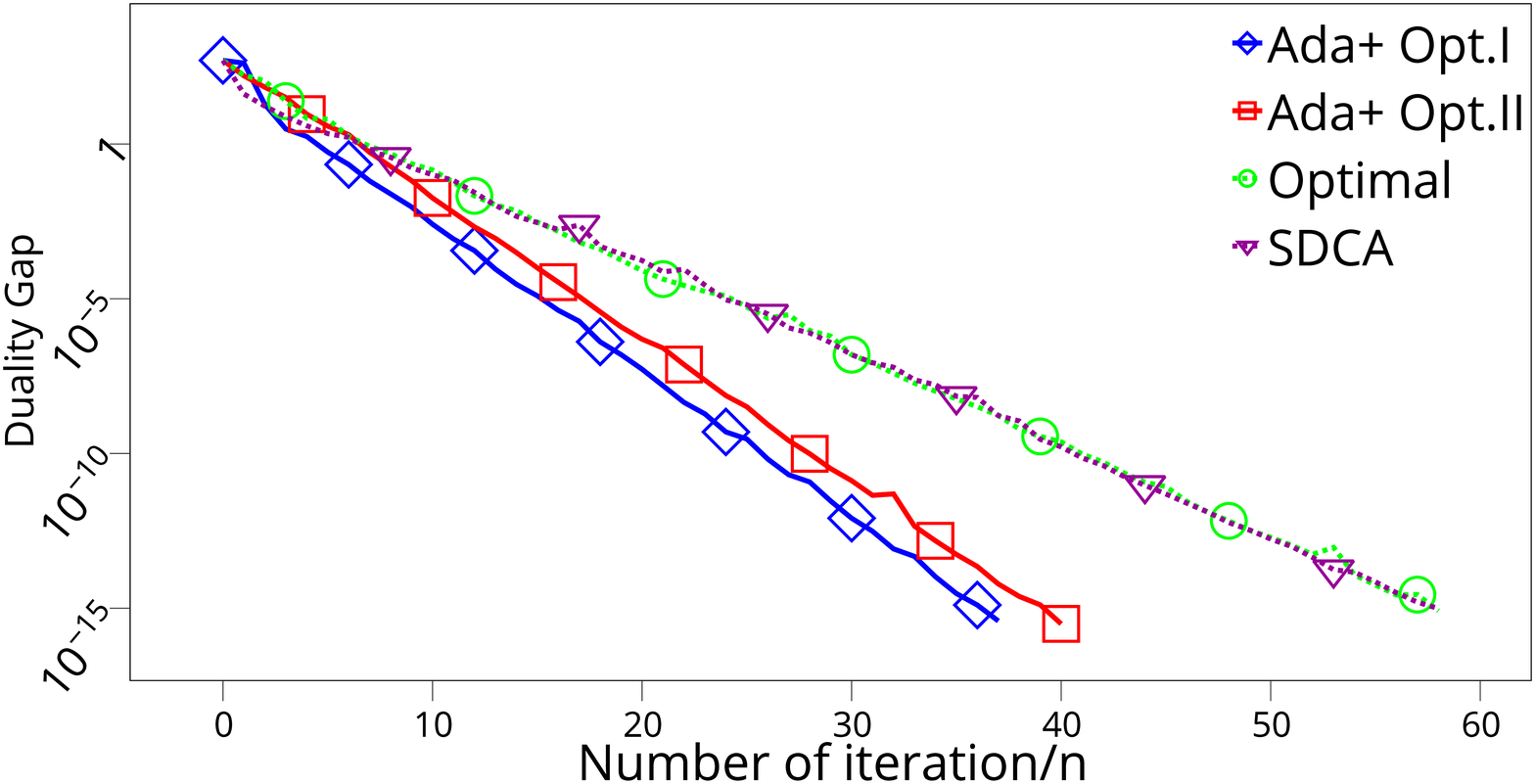}}
\caption{mushrooms dataset $d$ = 112, $n$ = 8124, Quadratic loss with $L_2$ regularizer, comparing number of iterations with known algorithms}
\label{mushrooms_quad_it}
\end{center}
\vskip -0.2in
\end{figure}

\begin{figure}[t]
\vskip 0.2in
\begin{center}
\centerline{\includegraphics[width=0.99\columnwidth]{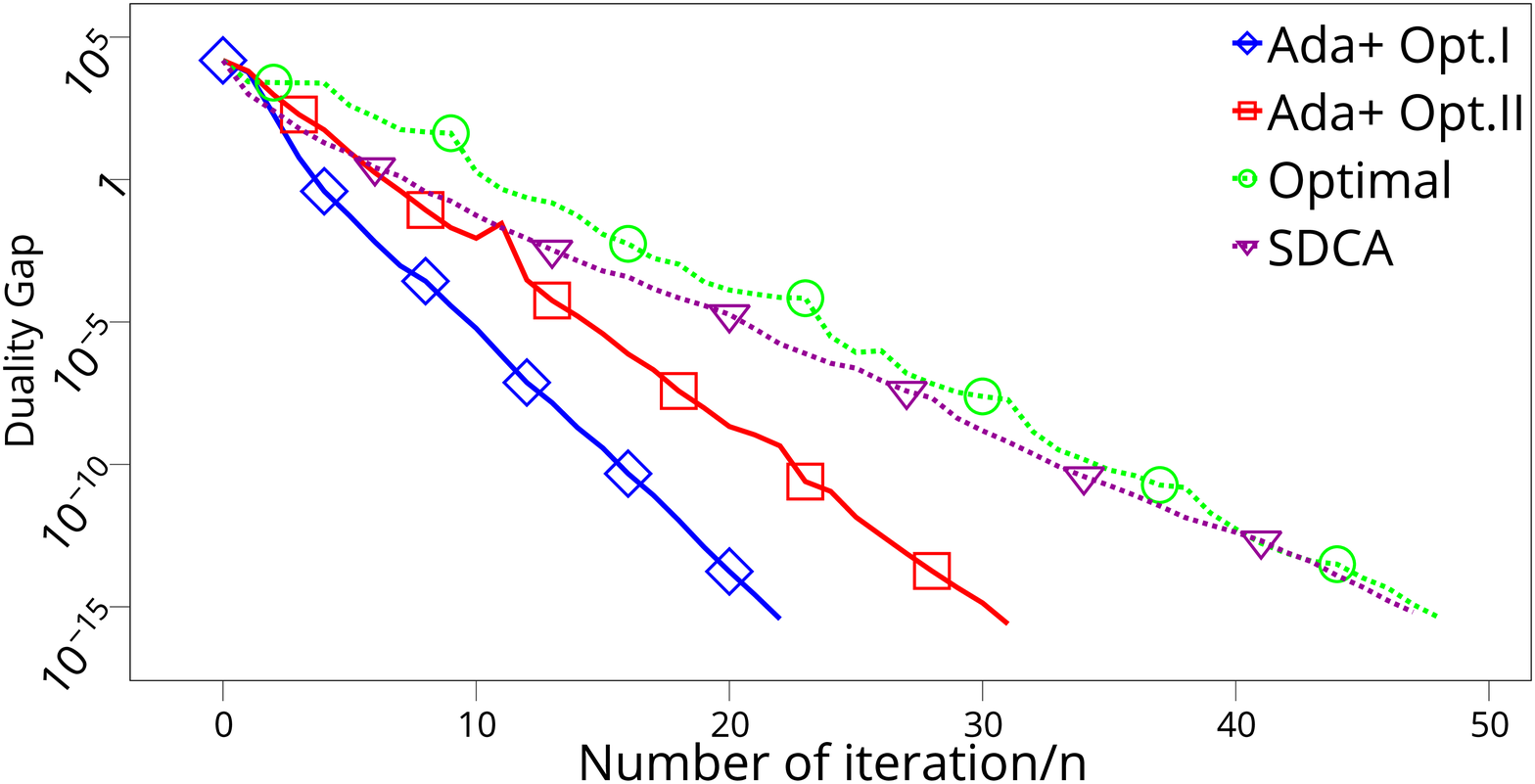}}
\caption{cov1 dataset $d$ = 54, $n$ = 581012, Quadratic loss with $L_2$ regularizer, comparing number of iterations with known algorithms}
\label{cov1_quad_it}
\end{center}
\vskip -0.2in
\end{figure}

\begin{figure}[t]
\vskip 0.2in
\begin{center}
\centerline{\includegraphics[width=0.99\columnwidth]{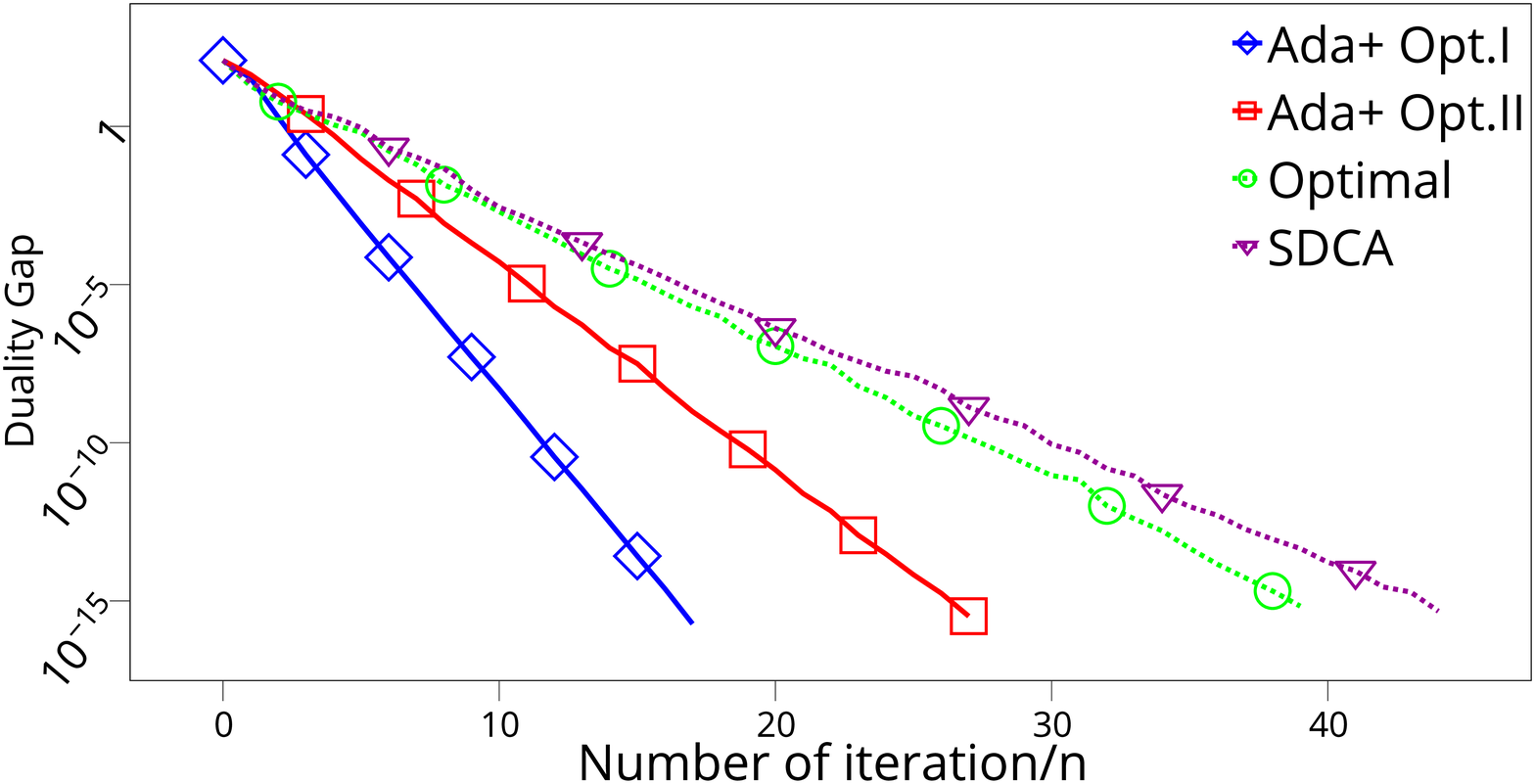}}
\caption{ijcnn1 dataset $d$ = 22, $n$ = 49990, Quadratic loss with $L_2$ regularizer, comparing number of iterations with known algorithms}
\label{ijcnn1_quad_it}
\end{center}
\vskip -0.2in
\end{figure}

\begin{figure}[t]
\vskip 0.2in
\begin{center}
\centerline{\includegraphics[width=0.99\columnwidth]{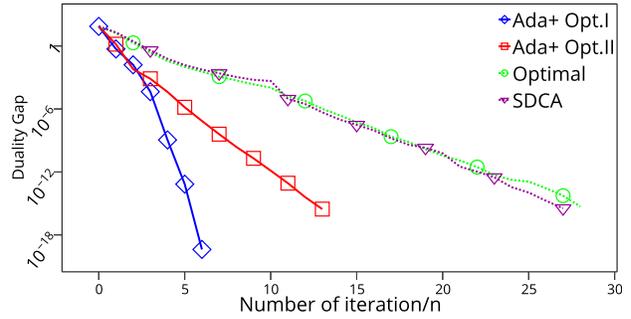}}
\caption{w8a dataset $d$ = 300, $n$ = 49749, Smooth Hinge loss with $L_2$ regularizer, comparing number of iterations with known algorithms}
\label{w8a_hinge_it}
\end{center}
\vskip -0.2in
\end{figure}

\begin{figure}[t]
\vskip 0.2in
\begin{center}
\centerline{\includegraphics[width=0.99\columnwidth]{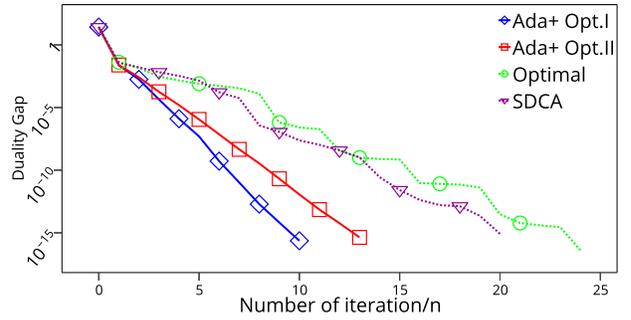}}
\caption{dorothea dataset $d$ = 100000, $n$ = 800, Smooth Hinge loss with $L_2$ regularizer, comparing number of iterations with known algorithms}
\label{dorothea_hinge_it}
\end{center}
\vskip -0.2in
\end{figure}

\begin{figure}[t]
\vskip 0.2in
\begin{center}
\centerline{\includegraphics[width=0.99\columnwidth]{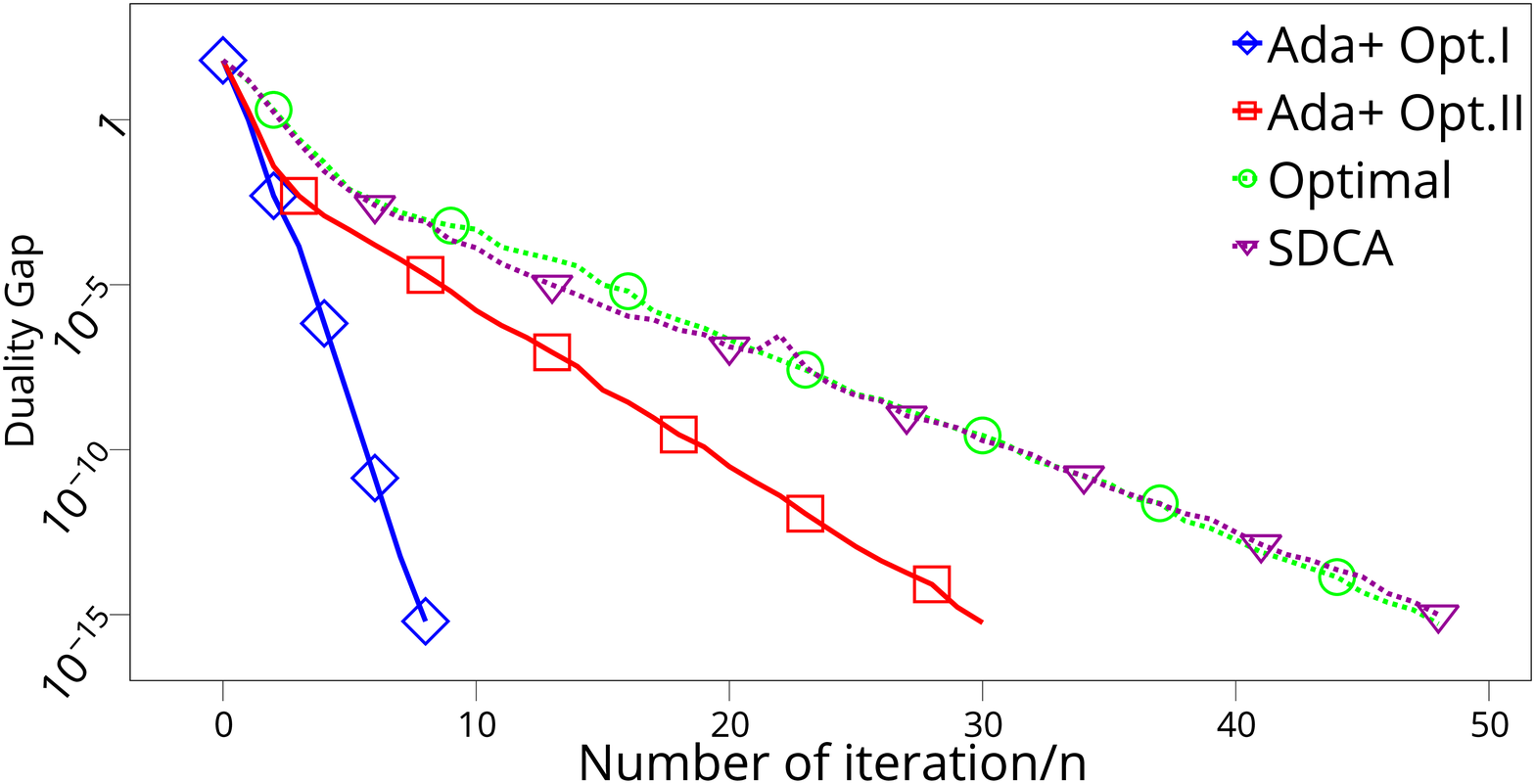}}
\caption{mushrooms dataset $d$ = 112, $n$ = 8124, Smooth Hinge loss with $L_2$ regularizer, comparing number of iterations with known algorithms}
\label{mushrooms_hinge_it}
\end{center}
\vskip -0.2in
\end{figure}

\clearpage

\begin{figure}[t]
\vskip 0.2in
\begin{center}
\centerline{\includegraphics[width=0.99\columnwidth]{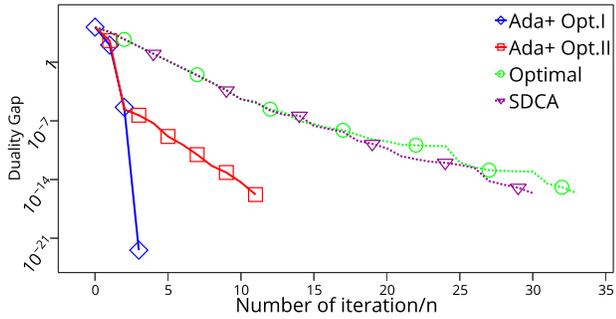}}
\caption{cov1 dataset $d$ = 54, $n$ = 581012, Smooth Hinge loss with $L_2$ regularizer, comparing number of iterations with known algorithms}
\label{cov1_hinge_it}
\end{center}
\vskip -0.2in
\end{figure}

\begin{figure}[t]
\vskip 0.2in
\begin{center}
\centerline{\includegraphics[width=0.99\columnwidth]{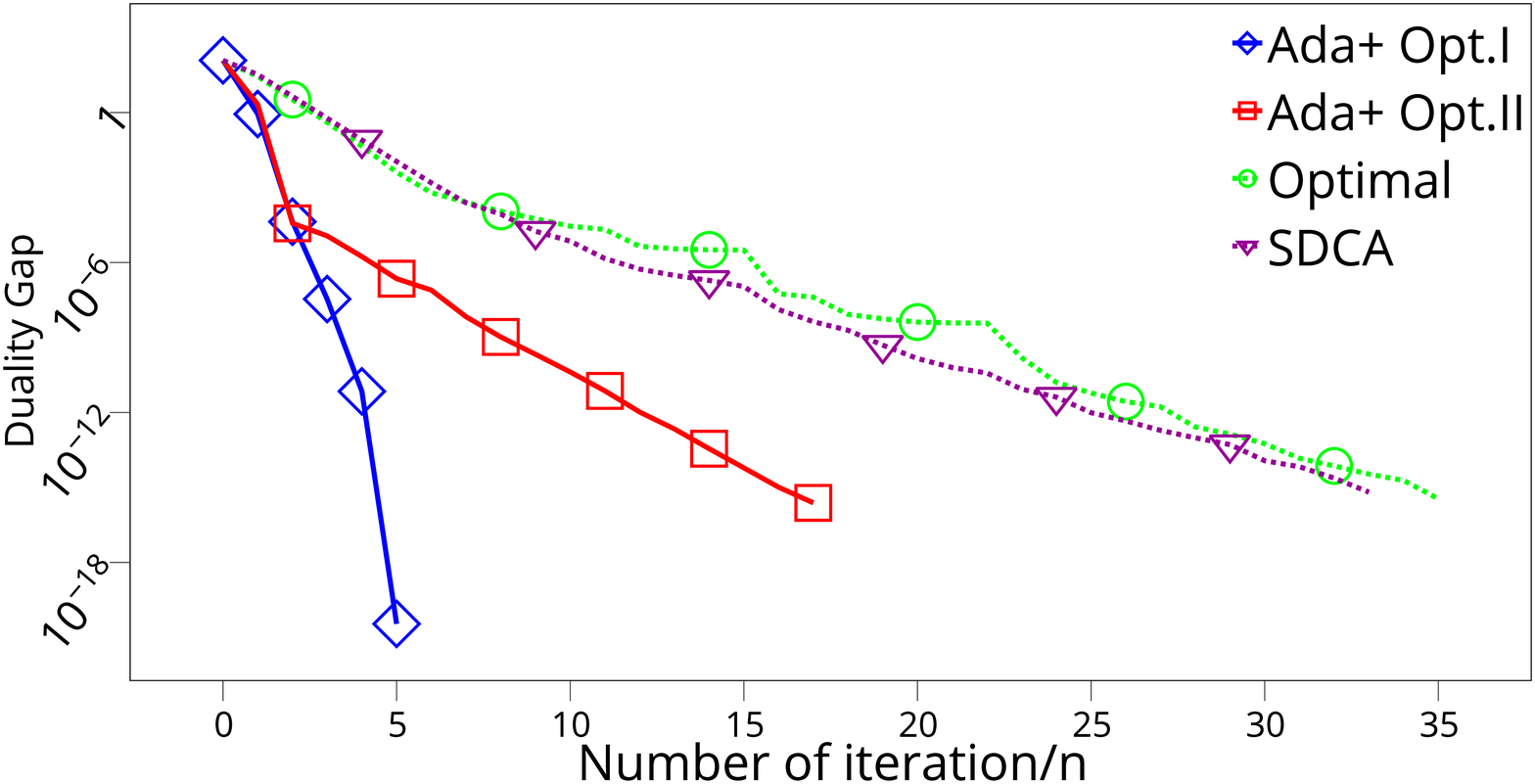}}
\caption{ijcnn1 dataset $d$ = 22, $n$ = 49990, Smooth Hinge loss with $L_2$ regularizer, comparing number of iterations with known algorithms}
\label{ijcnn1_hinge_it}
\end{center}
\vskip -0.2in
\end{figure}

\begin{figure}[t]
\vskip 0.2in
\begin{center}
\centerline{\includegraphics[width=0.99\columnwidth]{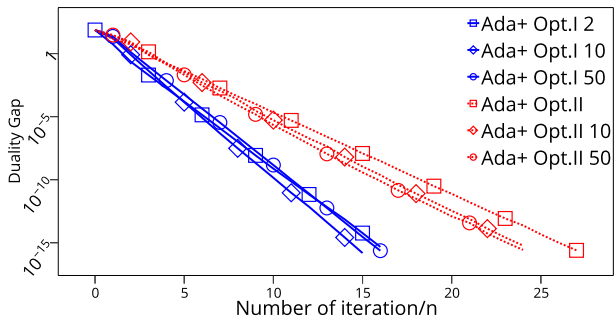}}
\caption{w8a dataset $d$ = 300, $n$ = 49749, Quadratic loss with $L_2$ regularizer, comparison of different choices of the constant $m$}
\label{w8a_quad_itm}
\end{center}
\vskip -0.2in
\end{figure}

\begin{figure}[t]
\vskip 0.2in
\begin{center}
\centerline{\includegraphics[width=0.99\columnwidth]{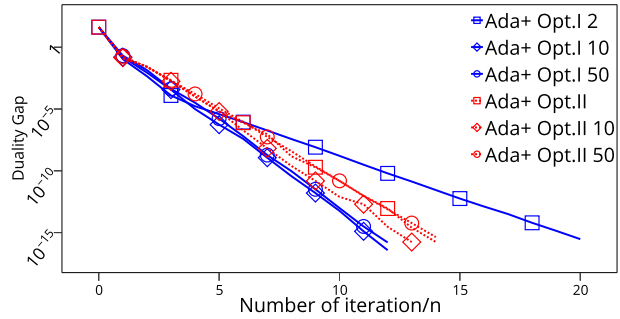}}
\caption{dorothea dataset $d$ = 100000, $n$ = 800, Quadratic loss with $L_2$ regularizer, comparison of different choices of the constant $m$}
\label{dorothea_quad_itm}
\end{center}
\vskip -0.2in
\end{figure}

\begin{figure}[t]
\vskip 0.2in
\begin{center}
\centerline{\includegraphics[width=0.99\columnwidth]{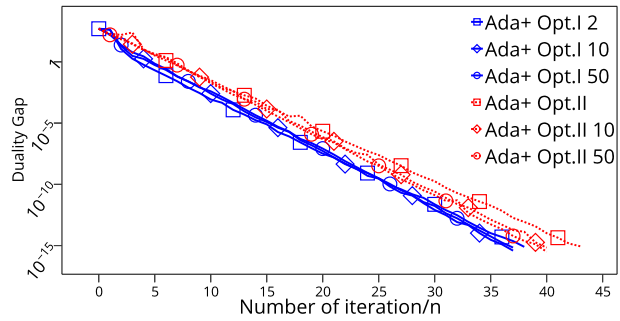}}
\caption{mushrooms dataset $d$ = 112, $n$ = 8124, Quadratic loss with $L_2$ regularizer, comparison of different choices of the constant $m$}
\label{mushrooms_quad_itm}
\end{center}
\vskip -0.2in
\end{figure}

\begin{figure}[t]
\vskip 0.2in
\begin{center}
\centerline{\includegraphics[width=0.99\columnwidth]{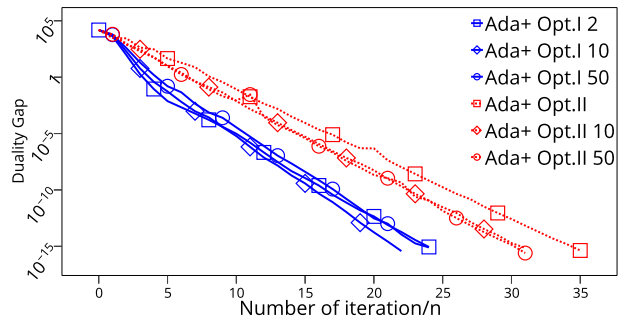}}
\caption{cov1 dataset $d$ = 54, $n$ = 581012, Quadratic loss with $L_2$ regularizer, comparison of different choices of the constant $m$}
\label{cov1_quad_itm}
\end{center}
\vskip -0.2in
\end{figure}

\clearpage

\begin{figure}[t]
\vskip 0.2in
\begin{center}
\centerline{\includegraphics[width=0.99\columnwidth]{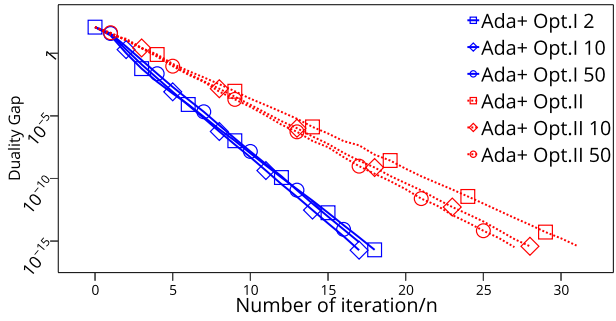}}
\caption{ijcnn1 dataset $d$ = 22, $n$ = 49990, Quadratic loss with $L_2$ regularizer, comparison of different choices of the constant $m$}
\label{ijcnn1_quad_itm}
\end{center}
\vskip -0.2in
\end{figure}

\begin{figure}[t]
\vskip 0.2in
\begin{center}
\centerline{\includegraphics[width=0.99\columnwidth]{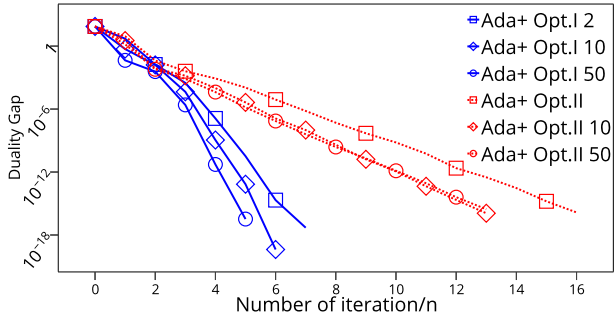}}
\caption{w8a dataset $d$ = 300, $n$ = 49749, Smooth Hinge loss with $L_2$ regularizer, comparison of different choices of the constant $m$}
\label{w8a_hinge_itm}
\end{center}
\vskip -0.2in
\end{figure}

\begin{figure}[t]
\vskip 0.2in
\begin{center}
\centerline{\includegraphics[width=0.99\columnwidth]{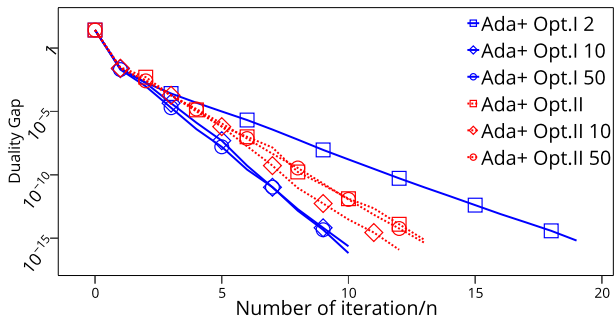}}
\caption{dorothea dataset $d$ = 100000, $n$ = 800, Smooth Hinge loss with $L_2$ regularizer, comparison of different choices of the constant $m$}
\label{dorothea_hinge_itm}
\end{center}
\vskip -0.2in
\end{figure}

\begin{figure}[t]
\vskip 0.2in
\begin{center}
\centerline{\includegraphics[width=0.99\columnwidth]{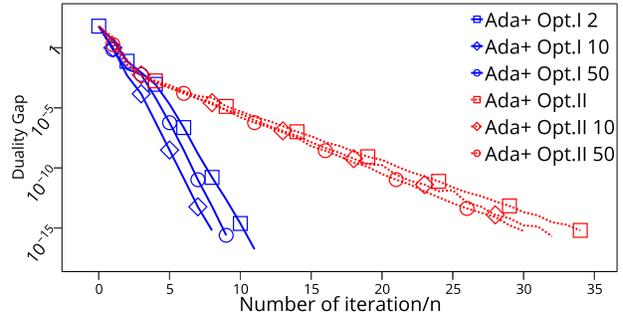}}
\caption{mushrooms dataset $d$ = 112, $n$ = 8124, Smooth Hinge loss with $L_2$ regularizer, comparison of different choices of the constant $m$}
\label{mushrooms_hinge_itm}
\end{center}
\vskip -0.2in
\end{figure}


\begin{figure}[ht]
\vskip 0.2in
\begin{center}
\centerline{\includegraphics[width=0.99\columnwidth]{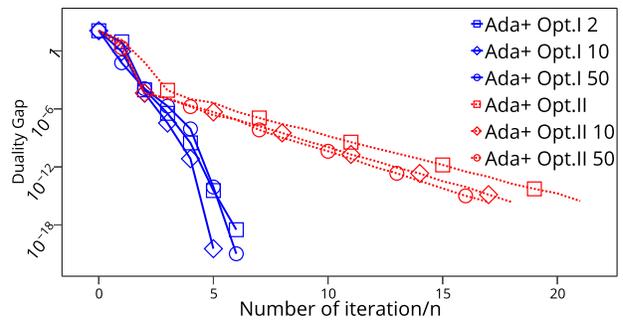}}
\caption{ijcnn1 dataset $d$ = 22, $n$ = 49990, Smooth Hinge loss with $L_2$ regularizer, comparison of different choices of the constant $m$}
\label{ijcnn1_hinge_itm}
\end{center}
\vskip -0.2in
\end{figure}  


 \clearpage
\bibliography{Paper_Adaptive}
\bibliographystyle{icml2015}

\clearpage
{\Large\textbf{Appendix}}
\section*{Proofs}\label{sec:proof}
We shall need the following inequality.
\begin{lemma} \label{lem:first}
Function $f : \R^n \rightarrow \R$ defined in \eqref{def:f} satisfies the following inequality:
\begin{equation}\label{eq:ff}
f(\alpha + h) \leq f(\alpha) + \langle\nabla f(\alpha),h\rangle + \frac{1}{2\lambda n^2}h^\top A^\top Ah, 
\end{equation}
holds for $\forall \alpha,h \in \R^n$.
\end{lemma}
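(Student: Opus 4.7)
The plan is to reduce the claim to the standard descent inequality for smooth convex functions applied to $g^*$.

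First I would invoke the duality between strong convexity and smoothness: since $g$ is $1$-strongly convex with respect to the $\ell_2$ norm, its Fenchel conjugate $g^*$ is differentiable and $1$-smooth with respect to $\ell_2$. Hence for any $u, v \in \R^d$,
\[
g^*(u+v) \;\leq\; g^*(u) + \langle \nabla g^*(u), v\rangle + \tfrac{1}{2}\|v\|_2^2.
\]

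Next I would specialize this to $u = \tfrac{1}{\lambda n} A\alpha$ and $v = \tfrac{1}{\lambda n} A h$, so that $u + v = \tfrac{1}{\lambda n} A(\alpha + h)$. Multiplying through by $\lambda$ and using the definition~\eqref{def:f} of $f$ gives
\[
f(\alpha+h) \;\leq\; f(\alpha) + \tfrac{1}{n}\bigl\langle \nabla g^*\!\bigl(\tfrac{1}{\lambda n}A\alpha\bigr),\, A h\bigr\rangle + \tfrac{1}{2\lambda n^{2}}\|Ah\|_2^{2}.
\]
Now I would identify the linear term with $\langle \nabla f(\alpha), h\rangle$ via the chain rule: since $f(\alpha) = \lambda g^*(\tfrac{1}{\lambda n}A\alpha)$, one has
\[
\nabla f(\alpha) \;=\; \tfrac{1}{n}\, A^\top \nabla g^*\!\bigl(\tfrac{1}{\lambda n}A\alpha\bigr),
\]
so $\tfrac{1}{n}\langle \nabla g^*(\tfrac{1}{\lambda n}A\alpha), Ah\rangle = \langle \nabla f(\alpha), h\rangle$. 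Rewriting $\|Ah\|_2^2 = h^\top A^\top A h$ yields exactly \eqref{eq:ff}.

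I do not foresee a real obstacle: the only nontrivial ingredient is the strong convexity / smoothness duality, which is standard. The remaining work is bookkeeping of the scalar $1/(\lambda n)$ that appears inside $g^*$ and cancels against the outer $\lambda$, giving the coefficient $1/(2\lambda n^{2})$ in front of the quadratic term.
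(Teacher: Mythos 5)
Your proposal is correct and follows essentially the same route as the paper's proof: apply the $1$-smoothness of $g^*$ (dual to the $1$-strong convexity of $g$) at the points $\tfrac{1}{\lambda n}A\alpha$ and $\tfrac{1}{\lambda n}Ah$, then rescale by $\lambda$ and identify the linear term via the chain rule. The bookkeeping of the factor $1/(\lambda n)$ and the resulting coefficient $1/(2\lambda n^2)$ is exactly as in the paper.
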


\begin{proof}
Since $g$ is 1-strongly convex, $g^*$ is 1-smooth. Pick $\alpha, h \in \R^n$. Since, $f(\alpha) = \lambda g^*(\frac{1}{\lambda n} A\alpha)$, we have
\begin{align*}
&f(\alpha + h) = \lambda g^*(\frac{1}{\lambda n}A\alpha + \frac{1}{\lambda n}Ah) 
\\ &\leq \lambda \left( g^*(\frac{1}{\lambda n}A\alpha) + \langle\nabla g^* (\frac{1}{\lambda n}A \alpha), \frac{1}{\lambda n} Ah \rangle + \frac{1}{2} \| \frac{1}{\lambda n}Ah\|^2 \right)
\\ &= f(\alpha) + \langle \nabla f(\alpha), h \rangle + \frac{1}{2\lambda n^2}h^\top A^\top Ah. 
\\
\end{align*}

\end{proof}

\begin{proof}[Proof of Lemma~\ref{l:Theoutof}]
It can be easily checked that the following relations hold
\begin{align}\label{a:dfeggg}
&\nabla_i f(\alpha^t)=\frac{1}{n}A_{i}^\top w^t, \enspace \forall t\geq 0, \enspace i\in[n],\\
&g(w^t)+g^*(\bar \alpha^t)=\ve{w^t}{\bar \alpha^t} ,\enspace \forall t\geq 0, \label{a:gwtgalphat}
\end{align}
where $\{w^t, \alpha^t,\bar \alpha^t\}_{t\geq 0}$ is the output sequence of Algorithm~\ref{alg:A}.
 Let $t\geq 0$ and $\theta \in [0, \min_i p^t_i]$. 
For each $i\in [n]$, since $\phi_i$ is $1/\gamma$-smooth,  $\phi_i^*$ is $\gamma$-strongly convex and thus for arbitrary $s_i
\in [0,1]$,
\begin{align}\notag
 &\phi_{i}^*(-\alpha_{i}^{t}+ s_i \kappa_{i}^t)
\\&=\phi_i^*\left((1-s_i)(-\alpha_i^t)+
s_i \nabla\phi_i(A_i^\top w^t)\right) \notag
\\ & \leq (1-s_i)\phi_i^*(-\alpha_i^t)+s_i \phi_i^*(\nabla \phi_i(A_i^\top w^t)) \notag
\\& \qquad  -\frac{\gamma s_i(1-s_i)|\kappa_i^t|^2}{2}.\label{a-deerefdf}
\end{align}
We have:
\begin{align}\notag
& f(\alpha^{t+1})-f(\alpha^t)\\& \overset{\eqref{eq:ff}}{\leq}\notag
\ve{\nabla f(\alpha^t)}{\alpha^{t+1}-\alpha^t}\\&\qquad+\frac{1}{2\lambda n^2} \notag
\ve{\alpha^{t+1}-\alpha^t}{A^\top A(\alpha^{t+1}-\alpha^t)}\\
&=\nabla_i f(\alpha^t) \Delta\alpha_{i_t}^t+\frac{v_i}{2\lambda n^2}|\Delta\alpha_{i_t}^t|^2\notag
\\&\overset{\eqref{a:dfeggg}}{=}\frac{1}{n}A_{i_t}^\top w^t \Delta\alpha_{i_t}^t+\frac{v_i}{2\lambda n^2}|\Delta\alpha_{i_t}^t|^2
\label{a:falpha}
\end{align}
Thus,
\begin{align*}
& D(\alpha^{t+1})-D(\alpha^t) \\
& \overset{\eqref{a:falpha}}{\geq}
-\frac{1}{n}A_{i_t}^\top w^t \Delta\alpha_{i_t}^t-\frac{v_{i_t}}{2\lambda n^2}|\Delta\alpha_{i_t}^t|^2+ \frac{1}{n}\sum_{i=1}^n 
\phi_i^*(-\alpha_{i}^{t})\\
&\qquad \qquad-\frac{1}{n}\sum_{i=1}^n\phi_i^*(-\alpha_{i}^{t+1}) \\
&=
-\frac{1}{n}A_{i_t}^\top w^t \Delta\alpha_{i_t}^t-\frac{v_{i_t}}{2\lambda n^2}|\Delta\alpha_{i_t}^t|^2
+\frac{1}{n}\phi_{i_t}^*(-\alpha_{{i_t}}^{t})   \\ &\qquad  - 
\frac{1}{n}\phi_{i_t}^*(-\left(\alpha_{i_t}^{t}+\Delta\alpha_{i_t}^t\right))\\
&=\max_{\Delta\in \R}
-\frac{1}{n}A_{i_t}^\top w^t \Delta-\frac{v_{i_t}}{2\lambda n^2}|\Delta|^2
+\frac{1}{n}\phi_{i_t}^*(-\alpha_{{i_t}}^{t})   \\ &\qquad \enspace - 
\frac{1}{n}\phi_{i_t}^*(-\left(\alpha_{i_t}^{t}+\Delta\right)),
\end{align*}
where the last equality follows from the definition of $\Delta\alpha_{i_t}^t$ in Algorithm~\ref{alg:A}.
Then by letting $\Delta=-s_i\kappa_{i_t}^t$ for some arbitrary $s_i\in [0,1]$ we get:
\begin{align*}
& D(\alpha^{t+1})-D(\alpha^t) \\
&\geq 
\frac{s_i A_{i_t}^\top w^t \kappa_{i_t}^t}{n }-\frac{s_i^2 v_{i_t}|\kappa_{i_t}^t|^2}{2\lambda n^2 }
+\frac{1}{n}\phi_{i_t}^*(-\alpha_{{i_t}}^{t})   \\ &\qquad \enspace - 
\frac{1}{n}\phi_{i_t}^*(-\alpha_{i_t}^{t}+ s_i\kappa_{i_t}^t)
\\& \overset{\eqref{a-deerefdf}}{\geq}
\frac{s_i}{n}\left(\phi_{i_t}^*(-\alpha_{i_t}^t)-
 \phi_{i_t}^*(\nabla \phi_{i_t}(A_{i_t}^\top w^t)) + A_{{i_t}}^\top w^t \kappa_{i_t}^t\right) \notag
\\& \qquad -\frac{s_i^2 v_{i_t}|\kappa_{i_t}^t|^2}{2\lambda n^2 }
 +\frac{\gamma s_i( 1-s_i)|\kappa_{i_t}^t|^2}{2n}
.\end{align*}
By taking expectation with respect to $i_t$ we get:
\begin{align}\notag
&\Exp_t\left[D(\alpha^{t+1})-D(\alpha^t)\right] \\ \notag &\geq \sum_{i=1}^n \frac{ p^t_is_i}{n}
\left[\phi_{i}^*(-\alpha_{i}^t)-
 \phi_i^*(\nabla \phi_{i}(A_{i}^\top w^t)) +  A_{i}^\top w^t \kappa_{i}^t\right]\\ 
&\quad- \sum_{i=1}^n \frac{ p_i^t s_i^2|\kappa_i^t|^2(v_i+\lambda \gamma n)}{2\lambda n^2}+\sum_{i=1}^n
\frac{p_i^t \gamma s_i  |\kappa_i^t|^2}{2n}.\label{a:fderdfs}
\end{align}
 Set
\begin{equation}\label{dis:s_i}
s_i=\left\{\begin{array}{ll}
0,&  \quad  i\notin I_t\\
     \theta/ p_i^t,& \quad i\in I_t
    \end{array}\right. 
\end{equation}
Then $s_i \in [0,1]$ for each $i\in [n]$ and by plugging it into~\eqref{a:fderdfs} we get:
\begin{align*}
&\Exp_t\left[D(\alpha^{t+1})-D(\alpha^t)\right]\\
&\geq \frac{\theta}{n}\sum_{i\in I_t} 
\left[\phi_{i}^*(-\alpha_{i}^t)-
 \phi_i^*(\nabla \phi_{i}(A_{i}^\top w^t)) +  A_{i}^\top w^t \kappa_{i}^t\right]\\
& \quad-\frac{\theta}{2\lambda n^2}\sum_{i \in I_t}\left(  \frac{\theta(v_i+n\lambda\gamma)}{p_i^t} - n\lambda \gamma \right) |\kappa^t_i|^2 
\end{align*}
Finally note that:
\begin{align*}
 &P(w^t)-D(\alpha^t)\\
&=\frac{1}{n}\sum_{i=1}^n \left[ \phi_i(A_i^\top w^{t})+\phi_i^*(-\alpha_i^t)\right]+\lambda \left( g(w^t)+g^*(\bar \alpha^t)\right)
\\
&\overset{\eqref{a:gwtgalphat}}{=}\frac{1}{n}\sum_{i=1}^n \left[\phi_i^*(-\alpha_i^t)+\phi_i(A_i^\top w^{t})\right]+\frac{1}{n}\ve{w^t}{A\alpha^t}
\\&= \frac{1}{n}\sum_{i=1}^n \left[ \phi_i^*(-\alpha_i^t)+ A_i^\top w^t\nabla \phi_i(A_i^\top w^t) \right.\\ & \qquad \qquad\left.-
\phi_i^*(\nabla \phi_i(A_i^\top w^t))+ A_i^\top w^t \alpha_i^t\right]
\\& =\frac{1}{n} \sum_{i=1}^n  \left[\phi_i^*(-\alpha_i^t)-
\phi_i^*(\nabla \phi_i(A_i^\top w^t))+A_i^\top w^t \kappa_i^t\right]
\\& = \frac{1}{n} \sum_{i\in I_t}  \left[\phi_i^*(-\alpha_i^t)-
\phi_i^*(\nabla \phi_i(A_i^\top w^t))+A_i^\top w^t \kappa_i^t\right]
\end{align*}

\end{proof}

\begin{proof}[Proof of Lemma~\ref{lem:pstar}]

Note that~\eqref{eq:optprelaxed} is a standard constrained maximization problem, where everything independent of $p$ can be treated as a constant. We define the Lagrangian $$ L(p, \eta) = \theta(\kappa, p) - \eta(\sum_{i=1}^n p_i - 1)$$ and get the following optimality conditions:
\begin{align*}
&\frac{|\kappa^t_i|^2(v_i + n\lambda\gamma)}{p_i^2} = \frac{|\kappa^t_j|^2(v_j + n\lambda\gamma)}{p^2_j}, ~ \forall i,j \in [n]
\\ &\sum_{i=1}^n p_i = 1\\
& p_i \geq 0,\enspace \forall i\in [n],
\end{align*}
the solution of which is~\eqref{a:optipsdf}.
\end{proof}

\begin{proof}[Proof of Lemma~\ref{l:Theoutofquadratic}]
 Note that in the proof of Lemma~\ref{l:Theoutof}, 
the condition $\theta\in [0,\min_{i\in I_t} p_i^t]$
 is only needed to ensure that $s_i$ defined by~\eqref{dis:s_i} is in $[0,1]$ so that~\eqref{a-deerefdf} holds.
 If $\phi_i$ is quadratic function, then~\eqref{a-deerefdf} holds for arbitrary $s_i \in \R$.
Therefore in this case we only need $\theta$ to be positive and the same reasoning holds.
\end{proof}
\newpage

\end{document}